\newcommand{\medcap}{\mathbin{\scalebox{1.5}{\ensuremath{\cap}}}}
\DeclareMathOperator{\St}{St}
\DeclareMathOperator{\Sing}{Sing}
\DeclareMathOperator{\dif}{d}
\DeclareMathOperator{\rank}{rank}
\DeclareMathOperator{\lcm}{lcm}
\DeclareMathOperator{\codim}{codim}
\DeclareMathOperator{\ord}{ord}
\DeclareMathOperator{\ns}{ns}
\def\log{\mathrm{log}\,}
\theoremstyle{plain}
\newtheorem{thm}{Theorem}[section]  
\newtheorem{cor}[thm]{{Corollary}} 
\newtheorem{lem}[thm]{{Lemma}}
\newtheorem{pro}[thm]{Proposition}
\theoremstyle{remark}
\newtheorem{rmk}[thm]{Remark}
\numberwithin{equation}{section}
\theoremstyle{plain}
\newcommand{\thistheoremname}{}
\newtheorem*{genericthm*}{\thistheoremname}
\newenvironment{namedthm*}[1]{\renewcommand{\thistheoremname}{#1}%
	\begin{genericthm*}}
	{\end{genericthm*}}
\newtheoremstyle{named}{}{}{\itshape}{}{\bfseries}{.}{.5em}{\thmnote{#3's }#1}
\theoremstyle{named}
\newcommand\thankssymb[1]{\textsuperscript{\@fnsymbol{#1}}}
\begin{document} 
\title[Entire holomorphic curves into $\mathbb{P}^n(\mathbb{C})$]{\bf Entire holomorphic curves into $\mathbb{P}^n(\mathbb{C})$
	\\
	intersecting $n+1$ general hypersurfaces
}

\subjclass[2010]{32H30,   32Q45, 14J99}
\keywords{Nevanlinna theory, Second Main Theorem, entire curves,  parabolic Riemann surfaces,
semi-abelian varieties}

\author{Zhangchi Chen}
\address{Morningside Center of Mathematics, Academy of Mathematics and System Science, Chinese Academy of Sciences, Beijing 100190, China}
\email{zhangchi.chen@amss.ac.cn}

\author{Dinh Tuan Huynh}

\address{Department of Mathematics, University of Education, Hue University, 34 Le Loi St., Hue City, Vietnam}
\email{dinhtuanhuynh@hueuni.edu.vn}

\author{Ruiran Sun}
\address{Department of Mathematics \& Statistics, McGill University, Burnside Hall
805 Sherbrooke Street West
Montreal, Quebec H3A 0B9}
\email{ruiran.sun@mcgill.ca}

\author{Song-Yan Xie}
\address{Academy of Mathematics and System Science \& Hua Loo-Keng Key Laboratory
	of Mathematics, Chinese Academy of Sciences, Beijing 100190, China;  
		School of Mathematical Sciences, University of Chinese Academy of Sciences, Beijing
100049, China}
\email{xiesongyan@amss.ac.cn}

\begin{abstract}
	Let $\{D_i\}_{i=1}^{n+1}$ be $n+1$  hypersurfaces in $\mathbb{P}^n(\mathbb{C})$ with total degrees $\sum_{i=1}^{n+1} \deg D_i\geqslant n+2$, in general position and satisfying a generic geometric condition: every $n$ hypersurfaces intersect only at smooth points and the intersection is transversal. Then, for every algebraically non-degenerate entire holomorphic curve $f\colon\mathbb{C}\rightarrow\mathbb{P}^n(\mathbb{C})$, 
	we show a Second Main Theorem:
	\[
 \sum_{i=1}^{n+1} \delta_f(D_i)
<
 n+1
	\]
	in terms of defect inequality in Nevanlinna theory. This is the first result in the literature on Second Main Theorem for $n+1$ general hypersurfaces in $\mathbb{P}^n(\mathbb{C})$ with optimal total degrees. 
\end{abstract}

\maketitle

\section{\bf Introduction}
Given a codimension one subvariety $D$ in a complex manifold $X$ such that
the complement $X\setminus D$ satisfies certain complex hyperbolicity quality in spirit of the Kobayashi conjecture~\cite{Kobayashi1970} or the Green-Griffiths conjecture~\cite{Green-Griffiths1980}, one seeks to reach a quantitative strengthening in terms of Second Main Theorem in Nevanlinna theory, which bounds in certain proportional way,  the ``growth rate'' of an algebraically nondegenerate  holomorphic map $f\colon S\rightarrow X$ from certain source space $S$ usually  being $\mathbb{C}$,  from above by the ``intersection frequency''  or ``impact'' of $f(S)$ with respect to $D$.

The classical example is Nevanlinna's celebrated work~\cite{Nevanlinna1925} which quantifies the

\begin{namedthm*}{Little Picard Theorem} If $p_1,p_2,p_3$ are three distinct points in $\mathbb{P}^1(\mathbb{C})$, then any meromorphic function $f\colon\mathbb{C}\rightarrow\mathbb{P}^1(\mathbb{C})\backslash\{p_1,p_2,p_3\}$ is constant.
\qed
\end{namedthm*}

For higher dimensional target space $X$,
for various source spaces $S$ and  holomorphic maps $f\colon S\rightarrow X$, we refer the readers to~\cite{Nevanlinna-book, Stoll77, Fujimoto-book, Noguchi-Winkelmann2014, Ru2021book} for later developments.
The leading problem is this direction is the following


\begin{namedthm*}{Fundamental Conjecture of Entire Curves}[cf.~\cite{Griffiths72, Noguchi-Winkelmann2014}]   Let $D$ be a simple normal crossing divisor on the projective space $\mathbb{P}^n(\mathbb{C})$ of degree $d\geqslant n+2$. Let $f\colon\mathbb{C}\rightarrow\mathbb{P}^n(\mathbb{C})$ be an entire holomorphic curve. If the image of $f$ is not contained in any hypersurface, then the following Second Main Theorem type estimate holds
\begin{equation}
\label{fundamental conj for entire curves}
(d-n-1)\,T_f(r)\leqslant N^{[k_0]}_f(r,D)+o\big(T_f(r)\big)
\qquad\parallel,
\end{equation}
where $k_0\in\mathbb{N}$ is a positive integer independent of $f$. 
\end{namedthm*}

Here $N^{[k_0]}_f(r,D)$ and $T_f(r)$ are standard notions in Nevanlinna theory, which will be introduced in the next paragraph. For non-negatively valued functions $\phi(r)$, $\psi(r)$ defined for $r\geqslant r_0\geqslant0$, we write
\[
\phi(r)\leqslant \psi(r)
\qquad\parallel
\]
if the inequality holds for $r\geqslant r_0$ outside a Borel set of finite Lebesgue measure.

\medskip
 Let $f\colon\mathbb{C}\rightarrow\mathbb{P}^n(\mathbb{C})$ be an entire holomorphic curve and let $D\subset\mathbb{P}^n(\mathbb{C})$ be a hypersurface such that $f(\mathbb{C})\not\subset D$. The order function
\[
T_f(r)
\,
:=
\,
\int_1^r\dfrac{\dif  t}{t} \int_{\mathbb{D}_t}f^*\omega_{FS}
    \qquad
{{\scriptstyle (r\,>\,1)},}
\]
is a geometric equivalent version of Nevanlinna's characteristic function, historically discovered independently by Shimizu and Ahlfors~\cite[pp.~11--12]{Noguchi-Winkelmann2014},
measuring the area growth of the image of the disc $\mathbb{D}_r$ centered at $0$ with radius $r$, with respect to the Fubini--Study metric $\omega_{FS}$. 
For $k\in\mathbb{N}\cup\{\infty\}$,
the level--$k$ truncated counting function 
\[
N_f^{[k]}(r,D)
\,
:=
\,
\int_1^r
\frac{\dif t}{t} \sum_{|z|<t}\min\{k,\ord_zf^*D\}
\]
captures the intersection frequencies of         $f(\mathbb{C})\cap D$. The defect of $f$ with respect to $D$ is given by
\[
\delta_f^{[k]}(D):=
\liminf_{r\rightarrow\infty}
\bigg(
1-\dfrac{N_f^{[k]}(r,D)}{\deg(D)\,T_f(r)}\bigg)
.
\]
For brevity, when $k=\infty$, we write $N_f(r,D)$, $\delta_f(D)$  instead of $N_f^{[\infty]}(r,D)$, $\delta_f^{[\infty]}(D)$ .

The {\sl First Main Theorem} in Nevanlinna theory, which is a reformulation of the Lelong-Jensen formula, provides an upper bound for the counting function in terms of the order function:
\[
N_f(r,D)
\leqslant
d\,T_f(r)
+
O(1),
\]
which implies
\begin{equation}
\label{defect-FMT}
0\leqslant \delta_f(D)\leqslant 1.
\end{equation}

The reverse direction, i.e.,  bounding the order function from above by the sum of counting functions of many divisors, is usually much harder. Such type of results are called {\sl Second Main Theorems}. The question of establishing a satisfactory estimate of the form \eqref{fundamental conj for entire curves} is still very open in general.  When $D$ consists of $q\geqslant n+2$ hyperplanes $H_i\subset \mathbb{P}^n(\mathbb{C})$ in general position ($1\leqslant i\leqslant q$), and $f: \mathbb{C}\rightarrow \mathbb{P}^n(\mathbb{C})$ is linearly nondegenerate, such a Second Main Theorem with truncation at level $n$ is established by H.~Cartan \cite{Cartan1933}, which yields the following {\sl defect relation}
\[
\sum_{i=1}^q\delta_f^{[n]}(H_i)\leqslant n+1.
\]
When all components  of $D$ are hypersurfaces and the image of $f$ is not contained in $D$, a Second Main Theorem  without effective truncation level \cite{Eremenko-Sodin1992}  were obtained by Eremenko-Sodin via potential theoretic method, which implies a defect relation bounded by $2n$. Assuming furthermore that $f$ is algebraically nondegenerate, a stronger estimate~\cite{Minru2004} was established by Ru, which yields a defect relation bounded by $n+1$. 
In \cite{Griffiths72}, Griffiths conjectured~\eqref{fundamental conj for entire curves} for $k_0=\infty$ in the right-hand side. 
This conjecture quantifies

\medskip
\begin{namedthm*}{
Logarithmic Green-Griffiths' Conjecture}[\cite{Green-Griffiths1980}] If $D$ is a simple normal crossing divisor on the projective space $\mathbb{P}^n(\mathbb{C})$ of degree $d\geqslant n+2$, then the image of any holomorphic curve $f:\mathbb{C}\rightarrow\mathbb{P}^n(\mathbb{C})$ omitting $D$ lies in some proper algebraic subvariety of $\mathbb{P}^n(\mathbb{C})$.
\end{namedthm*}

When $D$ has $q\leqslant n+1$ components,  few  Second Main Theorem type results toward~\eqref{fundamental conj for entire curves}  were known. In hindsight,
the difficulty is intimately related to establishing the (conjectured) hyperbolicity property of $X\setminus D$  
using the jet differential technique introduced by Bloch~\cite{Bloch26}.
Indeed, on one hand, the logarithmic fundamental vanishing theorem of entire curves states that,
any negatively twisted logarithmic (along $D$) $k$-jet differential  $\omega$ serves as an obstruction for the existence of entire curves
$f: \mathbb{C}\rightarrow X\setminus D$, since $f$ must obey the differential equation $f^*\omega\equiv 0$~(cf. e.g.~\cite{Ru2021book}). 
On the other hand, it is shown in
~\cite[Theorem~3.1]{HVX17}
that, for any
negatively twisted logarithmic (along $D$) $k$-jet differential $\omega$ and for any entire curve $f: \mathbb{C}\rightarrow X$ not contained in $D$, if $f^*\omega\not\equiv 0$, then one can obtain a Second Main Theorem (SMT) for the entire curve $f$ with respect to the divisor $D$.
 Thus, for showing the hyperbolicity of $X\setminus D$,
 or for obtaining a SMT of $f$ with respect to $D$, one tries
 to find sufficiently many negatively twisted logarithmic (along $D$) $k$-jet differentials $\{\omega_i\}_{i=1}^M$ having  ``tiny'' common base loci supporting no entire curve therein.

However, in practice, such approach is very difficult. 
 For instance, in the simplest case that $k=1$ and $D=\varnothing$, there was a related conjecture of Debarre~\cite{Debarre-conjecture} anticipating that, 
for general  $c\geqslant n/2$ hypersurfaces $H_1, \dots, H_c \subset \mathbb{P}^n(\mathbb{C})$ with large
degrees $\gg 1$, 
the intersection $X:= H_1\cap \cdots\cap H_c$ shall have
ample cotangent bundle $T_X^*$. The Debarre ampleness conjecture was first proved  in~\cite{Xie-Invent} (arXiv:1510.06323),
in which the difficulty of controlling the base loci was settled by ad hoc  symmetry of certain sophisticated deformed Fermat type polynomial equations,
using explicit $1$-jet differentials obtained in~\cite{Brotbek-MathAnn}. See also another proof~\cite{Brotbek-Darondeau} (arXiv:1511.04709) appeared shortly later using more symmetric generalized Fermat type polynomial equations.

In the vein of Siu's strategy~\cite{Siu04} for the  Kobayashi and Green-Griffiths conjectures, namely by  using  slanted vector fields~\cite{Siu02, Merker2009, Darondeau_vectorfield} and certain Riemann-Roch calculation~\cite{Darondeau2016}, a Second Main Theorem~\cite{HVX17} was established in the case $q=1$  for algebraically nondegenerate entire curve $f: \mathbb{C}\rightarrow \mathbb{P}^n(\mathbb{C})$ with respect to a general hypersurface $D\subset \mathbb{P}^n(\mathbb{C})$ of  large degree $d\geqslant 15\,(5\,n+1)\,n^n$. Thanks to the breakthrough~\cite{Riedl-Yang22} of Riedl and Yang, one can remove the Zariski dense assumption on $f(\mathbb{C})\subset \mathbb{P}^n(\mathbb{C})$. Moreover, the exponential degree bound can be improved to some polynomial bound $O(n^4)$ by the recent advancement of B\'erczi and Kirwan~\cite{Berczi-Kirwan-2023-1,Berczi-Kirwan-2023-2}. 

From now on,  finitely many hypersurfaces $D_1,\dots,D_q$, $q\geqslant n$, are said to be {\em intersecting transversally}, if for any $n$ hypersurfaces $D_{j_1},\dots,D_{j_n}$, and for any $z$ in their intersection, we have
\begin{itemize}
\item $z$ is {\em a smooth point} of each $D_{j_k}$, $1\leqslant k\leqslant n$;
\item {\em the normal vectors} of the tangent spaces $T_z D_{j_k}$, $1\leqslant k\leqslant n$, are {\em linearly independent}.
\end{itemize}

In this paper, we study the case that 
$
D
=
\cup_{i=1}^{q}
D_i$ 
consists of $q= n+1$ hypersurfaces $D_i\subset\mathbb{P}^n(\mathbb{C})$ (not all being hyperplanes) in general position and intersecting transversally.
The algebraic degeneracy of entire holomorphic curves into the complement
$\mathbb{P}^n(\mathbb{C})\setminus D$ 
was established by Noguchi-Winkelmann-Yamanoi \cite{NWA2007}. See~\cite[Theorem~1.6]{Guo-Sun-Wang-2021} and \cite[Theorem~1.2]{Guo-Sun-Wang-2022} for  moving target versions. Quantitatively, we obtained a Second Main Theorem.

\begin{namedthm*}{Main Theorem}
Let $\{D_i\}_{i=1}^{n+1}$ be $n+1$  hypersurfaces in $\mathbb{P}^n(\mathbb{C})$ with total degrees $\sum_{i=1}^{n+1} \deg D_i\geqslant n+2$, in general position and intersecting transversally. Then, for every algebraically nondegenerate entire holomorphic curve $f\colon\mathbb{C}\rightarrow\mathbb{P}^n(\mathbb{C})$, the following defect inequality holds 
	\begin{equation}
	\label{defect relation}
	\sum_{i=1}^{n+1} \delta_f(D_i)
	< n+1.
	\end{equation}
\end{namedthm*}

Clearly, $\delta_f(D_i)=1$ if and only if \begin{equation}
\label{defect=1}
N_f(r,D_i)=o\big(T_f(r)\big)
\qquad
{{\scriptstyle (r\,\rightarrow\,\infty)}},
\end{equation}
literally, the curve $f$ does not meet $D_i$ often. Theorefore~\eqref{defect relation} serves as a weak Second Main Theorem. 

As a matter of fact, our initial motivation is to study the case of $3$ conics in $\mathbb{P}^2(\mathbb{C})$~\cite{Grauert-Peternell1985, dethloff_schumacher_wong1995}.
Even in this simple case, the aforementioned methods  seem infertile.  

Back to our main theorem, we will take an alternative geometric approach in which the number 
\[
n+1=\dim_{\mathbb{C}} \mathbb{P}^n(\mathbb{C})+1
\] 
of  components of $D$ is critical. 
Let us sketch the proof now.
For simplicity, we assume that every hypersurface 
$
D_i\subset \mathbb{P}^n(\mathbb{C})
$ is defined by some homogeneous polynomial $Q_i\in \mathbb{C}[z_0, \dots, z_n]$ of equal degree $d$.
Suppose on the contrary that~\eqref{defect relation} fails, i.e., by~\eqref{defect-FMT}, 
all defect values reach maximum
\begin{equation}
\label{presumed condition} \delta_f(D_i)=1 
\qquad
{\scriptstyle (i\,=\,1, \,\dots,\, n+1)}.  
\end{equation}
For the parabolic Riemann surface
$\mathbb{C}\setminus f^{-1}(D)$, 
we will employ an exhaustion function $\sigma$ such that 
the weighted Euler characteristic $\mathfrak{X}_{\sigma}(r)$ is negligable 
\begin{equation}
\limsup_{r\rightarrow \infty}\dfrac{\mathfrak{X}_{\sigma}(r)}{T_{f, \sigma}(r)}
=
0
\end{equation}
compared with the parabolic order function $T_{f,\sigma}(r)$ (see Section~\ref{section semiabelian}).

The key trick is introducing the auxiliary
hypersurface $\mathcal{V}\subset \mathbb{P}^n(\mathbb{C})$  defined by the Jacobian
\[
\det
\dfrac{\partial (Q_1, \dots, Q_{n+1})}{\partial (z_0, \dots, z_{n})}
\]
of degree 
$
\sum_{i=0}^n d_i-(n+1)$.
Such hypersurface $\mathcal{V}$ was used in \cite[p.~261]{Francois-Duval-2001} and~\cite{MR2369088} for $n=2$, and in~\cite{Guo-Sun-Wang-2021, Guo-Sun-Wang-2022} for general $n$. 
Geometrically, $\mathcal{V}$
consists of the critical points of the endomorphism 
\begin{equation*}
F(z)=\big[Q_1(z):Q_2(z):\dots:Q_{n+1}(z)\big]\quad
\colon\quad \mathbb{P}^n(\mathbb{C})\longrightarrow\mathbb{P}^n(\mathbb{C}).
\end{equation*}
Whence if the entire curve $f$ intersects $\mathcal{V}$ at a point $P\in \mathbb{C}$, the composition $g:=F\circ f$ must be tangent to $\mathcal{W}:=F(\mathcal{V})$, i.e., having intersection multiplicity~$\geqslant 2$ at  $P$.
For $\{H_i\}_{i=1}^{n+1}$ in general position, the hypersurface $\mathcal{V}$ is {\em in general position with} $\{H_i\}_{i=1}^{n+1}$, {\em i.e.}, $\mathcal{V}$ and any $n$ hypersurfaces among $\{H_i\}_{i=1}^{n+1}$ have empty intersection, {\em if and only if} $\{H_i\}_{i=1}^{n+1}$ are intersecting transversally. The {\em if} part is provided in~\cite[Section~5]{Guo-Sun-Wang-2022}. We will prove the {\em only if} part in Lemma~\ref{lem:transversal}. Thus we can apply a Second Main Theorem of  Ru~\cite{Minru2004} to show that,
under the presumed condition~\eqref{presumed condition},
the intersection frequency of the holomorphic curve $\widetilde{f}:=f|_{\mathbb{C}\setminus f^{-1}(D)}$ with
$\mathcal{V}$ must be high. This will contradict  another fact, to be obtained in Section~\ref{section semiabelian} following a strategy of
Noguchi-Winkelmann-Yamanoi~\cite{Noguchi-Yamanoi-Winkelmann2008},
that the parabolic holomorphic curve $\widetilde{g}:=g|_{\mathbb{C}\setminus f^{-1}(D)}$
into the semi-abelian variety $(\mathbb{C}^*)^n\subset\mathbb{P}^n(\mathbb{C})$ cannot be 
tangent to the effective divisor $\mathcal{W}$ very often. For details of proofs, see Sections~\ref{sect-smoothing} and~\ref{Proof of the main theorem}.

\section*{Acknowledgement}
We learned the question of seeking a Second Main Theorem in the presence of three conics in the projective plane from Julien Duval
and from him we got fruitful ideas, e.g. the hypersurface $\mathcal{V}$. Here we would like to address our profound gratitude to him. We are also grateful to Junjiro Noguchi for sharing his interesting ideas and private note with us. 
We thank Min Ru for his comments and valuable suggestions on an early manuscript, and for drawing our attention to~\cite{MR2369088, Guo-Sun-Wang-2021, Guo-Sun-Wang-2022}.  
We thank Xiangyu Zhou for his remarks during a seminar talk.
S.-Y. Xie is partially supported by National Key R\&D Program of China Grant No.~2021YFA1003100 and NSFC Grant No.~12288201.
D.T. Huynh  is supported by the Vietnam Ministry of Education and Training under grant number B2024-DHH-01, and
a part of this article was written while he was
visiting Vietnam Institute for Advanced Study in Mathematics (VIASM).
R. Sun thanks the CRM, Montreal, for its support during his CRM-postdoctoral fellowship and to McGill University for its hospitality.
Z. Chen is supported in part by the Labex CEMPI (ANR-11-LABX-0007-01), the project QuaSiDy (ANR-21-CE40-0016), and China Postdoctoral Science Foundation (2023M733690).

\section{\bf Parabolic Nevanlinna theory in semi-abelian varieties and projective spaces}
\label{section semiabelian}

A non-compact Riemann surface $\mathcal Y$ is called {\it parabolic} if it admits a smooth
exhaustion function 
\[
\sigma\colon {\mathcal Y}\to [1, \infty[
\]
such that $\log \sigma$ is harmonic outside a compact subset of 
$\mathcal{Y}$. For every $r>1$, we denote by 
\[
B^\sigma_r:= \big\{z\in \mathcal Y : \sigma(z)< r\big\},\qquad\qquad
S^\sigma_r:= \big\{z\in \mathcal Y : \sigma(z)= r\big\},
\]
the open {\sl parabolic ball} and the {\sl parabolic sphere} of radius $r$ respectively.
By Sard's theorem,
for almost every value $r\in \mathbb{R}_{>1}$, the sphere $S^\sigma_r$ is
smooth. We donote the  Euler characteristic
of $B^\sigma_r$ by $\chi_{\sigma} (r)$,
and we consider the induced length measure
\[
\dif\mu_r:= \dif^c\log \sigma|_{S^\sigma_r},
\]
where $\dif^c:=\frac{\sqrt{-1}}{4\pi}(\bar{\partial}-\partial)$.
 The {\sl weighted Euler characteristic} $\mathfrak{X}_{\sigma}(r)$ is then defined by logarithmic average
\[
\mathfrak{X}_{\sigma}(r):=\int_1^r\chi_{\sigma}(t)\,\dfrac{\dif t}{t}
\qquad
{\scriptstyle (r\,>\,1)}.
\]

Replacing the exhaustion $\mathbb{C}=\cup_{r>1}\,\mathbb{D}_r$ by $\mathcal{Y}=\cup_{r>1}\,B^{\sigma}_r$, one can develop Nevanlinna theory for parabolic Riemann surfaces (cf.~\cite{Stoll77, Sibony-Paun2021}). Let $X$ be a compact complex manifold. Let $L$ be a holomorphic line bundle on $X$ equipped with some Hermitian metric $\|\!\cdot\!\|$ with the Chern $(1, 1)$-form $\omega_L$. Let $E$ be an effective divisor defined by a global nonzero section $s$ of $L$. In the parabolic context, the standard notions in Nevanlinna theory are defined as follows.
\begin{itemize}
\item [1.] The $k$-truncated counting function
\[
N^{[k]}_{f, \sigma}(r, E)
\,
:=
\,
\int_1^r
\sum_{z\in B^\sigma_t}\min\{k,\ord_zf^*E\}
\,
\frac{\dif t}{t}
\qquad
{{\scriptstyle (k\,\in\, 
\mathbb{N}\,\cup \,\{\infty\};\, r\,>\,1)}}.
\]
\item [2.] The proximity function
\[
m_{f, \sigma}(r, E)
\,
:=
\,
\int_{S^\sigma_r}
\log
\dfrac{1}{\|s\circ f\|}
\,
\dif\mu_r
\qquad
{\scriptstyle (r\,>\,1)}.
\]
\item [3.] The order function
\[
T_{f, \sigma}(r,L)
\,
:=
\,
\int_1^r\dfrac{\dif t}{t}\int_{B^\sigma_t}f^*\omega_L
\qquad
{{\scriptstyle (r\,>\,1)}}.
\]
\end{itemize}
By Jensen's formula in the parabolic setting~\cite[Proposition~3.1]{Sibony-Paun2021}, one has the following 

\begin{namedthm*}{Parabolic First Main Theorem}
 Let $f\colon\mathcal{Y}\rightarrow X$ be a holomorphic map such that $f(\mathcal{Y})\not\subset\mathrm{Supp}(E)$. Then
\[
T_{f, \sigma}(r,L)
\,
=
\,
m_{f, \sigma}(r, E)
+
N_{f, \sigma}(r, E)
+O(1)
\qquad
{{\scriptstyle (r\,>\,1)}}.
\]	
\qed
\end{namedthm*}

For a parabolic Second Main Theorem, the weighted Euler characteristic naturally appears. Define the proximity function for the critical set as \cite[Definition~3.4]{Sibony-Paun2021}
\[
\mathfrak{X}^+_{ \sigma}(r)
:=
\int_{S^{\sigma}_r}\log^+|\text{d}\sigma(\tfrac{\partial}{\partial z})|^2\,\text{d}\mu_r.
\]
In \cite[pp.~32--33]{Sibony-Paun2021}, it is proved that for $\mathcal{Y}=\mathbb{C}\backslash\mathcal{E}$ with $\mathcal{E}=\{a_j\}_{j=1}^{\infty}$ a discrete countable set of  points in $\mathbb{C}$, one can take $r_j\in(0,1)$ sufficiently small such that
\begin{itemize}
\smallskip
\item[$\bullet$]
the discs $\mathbb{D}(a_j,2\,r_j)$ are disjoint,
\smallskip
\item[$\bullet$] the sum $\sum\limits_{j\geqslant 1}r_j<+\infty$.
\end{itemize}
For a smoothing $\sigma$ (see Section~\ref{sect-smoothing} or the Appendix for details) of the exhaustion function $\hat\sigma$ defined by
\[
\log\hat\sigma:=\log^+|z|+\sum\limits_{j\geqslant 1}r_j\,\log^+\tfrac{r_j}{|z-a_j|},
\]
we see that $\dif\dif^c\log\sigma$ is of finite mass and
\[
\mathfrak{X}^+_{ \sigma}(r)=\mathfrak{X}_{\sigma}(r)+O(\log r).
\]

\begin{namedthm*}{Parabolic Logarithmic Derivative Lemma} (\cite[Theorem~3.8]{Sibony-Paun2021})
Let $f\colon\mathcal{Y}\rightarrow\mathbb{P}^1(\mathbb{C})$ be a nonconstant meromorphic function. For any $\delta>0$, one has
\begin{align}\label{ldl}
m_{\frac{f'}{f}, \sigma}(r)
\leqslant
(1+\delta)^2\,\big(\log T_{f,\sigma}(r)\big)+(1+\delta)\,\log r+\mathfrak{X}^+_{ \sigma}(r)
+O(1)
\qquad\parallel.
\end{align}
\qed
\end{namedthm*}

In particular, when $\mathcal{Y}=\mathbb{C}\backslash\mathcal{E}$ with $\mathcal{E}=\{a_j\}_{j=1}^{\infty}$ a discrete countable set of points in $\mathbb{C}$, there exists some positive constant $C>0$ such that the following estimate
\[
m_{\frac{f'}{f}, \sigma}(r)
\leqslant
C\,\big(\log T_{f,\sigma}(r)+\log r\big)+\mathfrak{X}_{ \sigma}(r)
\qquad\parallel.
\]

Consequently, some results in the value distribution theory of entire holomorphic curves can be translated to the parabolic setting.

Throughout this section, we fix a smooth exhaustion $\sigma$ on the parabolic Riemann surface $\mathcal{Y}$. 
In \cite{Noguchi-Yamanoi-Winkelmann2008}, Noguchi-Winkelmann-Yamanoi established a Second Main Theorem type estimate for $k$-jet liftings of algebraically nondegenerate entire holomorphic curves $f$ into semi-abelian varieties with the optimal truncation level-one counting function, accepting an error term of the form $\epsilon\,T_{f,\sigma}(r)$, or equivalently $o\big(T_{f,\sigma}(r)\big)$, see \cite[Lemma~1.5]{Yamanoi2013}. This provides several applications in studying the degeneracy of holomorphic curves \cite{NWA2007,Noguchi-Yamanoi-Winkelmann2008}.

This remarkable result can be translated into the parabolic context, but we need to take into account the weighted Euler characteristic $\mathfrak{X}_\sigma(r)$ appearing each time when we apply the logarithmic derivative lemma. Hence from now on, we assume that
\begin{equation}
\label{assumption of weight euler charactersistic}
\limsup_{r\rightarrow\infty}\dfrac{\mathfrak{X}_\sigma(r)}{T_{f,\sigma}(r)}
=
0.
\end{equation}

For our Main Theorem, we only need to deal with parabolic holomorphic curves in $(\mathbb{C}^*)^n$. Nevertheless, we must use higher order jets and establish a Second Main Theorem type estimate, not only for divisors, but also for subvarieties of codimension $\geqslant 2$ (cf.~\cite[Section~2.4.1]{Noguchi-Winkelmann2014}). For the notions and the properties of logarithmic $k$-jet bundles, we refer the readers to \cite{Noguchi1986, Dethloff-Lu2001}.

Under the assumption~\eqref{assumption of weight euler charactersistic}, we can translate the result of~\cite[Theorem~6.5.1]{Noguchi-Winkelmann2014}
for the special case $A:=(\mathbb{C}^*)^n$ in the parabolic context as follows. 
\begin{thm}
	\label{smt-semiabelian, truncation level1}
	Let $\mathcal{Y}$ be a parabolic Riemann surface with an exhaustion function $\sigma$. Let $f\colon\mathcal{Y}\rightarrow A:=(\mathbb{C}^*)^n$ be an algebraically nondegenerate holomorphic curve. For an integer $k\geqslant 0$, denote by $J_kf$ the $k$-jet lifting of $f$ and by $X_k(f)$ the Zariski closure of $J_kf$ in the $k$-jet space $J_k(A)$.  Let $Z$ be an algebraic reduced subvariety of $X_k(f)$. 
	\begin{enumerate}
		\item 
		There exists a compactification $\bar{X}_k(f)$ of $X_k(f)$  such that
		\begin{equation*}
		T_{J_kf,\sigma}(r,\omega_{\bar{Z}})\leqslant
		N^{[1]}_{J_kf,\sigma}(r,Z)
		+
		o\big(T_{f,\sigma}(r)\big)
		\,\,\parallel,
		\end{equation*}
		where $\bar{Z}$ denotes the closure of $Z$ in $\bar{X}_k(f)$.
		\item	
		Assume furthermore that $\codim_{X_k(f)}Z\geqslant 2$, then 
		\begin{equation*}
		T_{J_kf,\sigma}(r,\omega_{\bar{Z}})
		=
		o\big(T_{f,\sigma}(r)\big)
		\,\,\parallel.
		\end{equation*}
		\item In the case where $k=0$ and $Z$ is an effective divisor $D$ on $A$, there exists a smooth compactification of $A$ independent of $f$,  such that
		\[
		T_{f,\sigma}\big(r,L(\overline{D})\big)
		\leqslant
		N^{[1]}_{f,\sigma}(r,D)
		+
		o\Big(T_{f,\sigma}\big(r,L(\overline{D})\big)\Big)
		\,\,\parallel.
		\] 
	\end{enumerate}
\end{thm}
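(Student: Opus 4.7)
\medskip

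\noindent\textbf{Proof proposal.} The plan is to reproduce the proof of \cite[Theorem~6.5.1]{Noguchi-Winkelmann2014} in the parabolic setting, carefully monitoring how the error terms change each time the Logarithmic Derivative Lemma is invoked. In the classical entire-curve case on $\mathbb{C}$, every application of the LDL produces an error absorbed into $o(T_f(r))\,\parallel$; the parabolic Sibony--P\u{a}un version recalled above produces an additional contribution of order $\mathfrak{X}^+_\sigma(r) = \mathfrak{X}_\sigma(r) + O(\log r)$. Under the standing hypothesis~\eqref{assumption of weight euler charactersistic}, namely $\mathfrak{X}_\sigma(r) = o(T_{f,\sigma}(r))\,\parallel$, a bounded number of LDL applications still yields only a cumulative error of size $o(T_{f,\sigma}(r))\,\parallel$. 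This is the only point at which the proof needs to diverge from the one in \cite{Noguchi-Winkelmann2014}.

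For part~(3), the simplest case, I would choose a smooth equivariant toric compactification $\overline{A}$ of $A = (\mathbb{C}^*)^n$ (e.g.\ $(\mathbb{P}^1)^n$) and, using multiplicative coordinates $z_1,\dots,z_n$, expand the defining section of $L(\overline{D})$ as a Laurent polynomial in $z_1\circ f,\dots,z_n\circ f$. The proximity function $m_{f,\sigma}(r,\overline{D})$ is then bounded above by a finite sum of proximity functions of the logarithmic derivatives $\tfrac{(z_j\circ f)'}{z_j\circ f}$ after appropriate differentiation, each of which is controlled by the parabolic LDL; by the first paragraph, the total error is $o(T_{f,\sigma}(r))\,\parallel$, and the parabolic First Main Theorem delivers the desired bound.

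For part~(1), one lifts the argument one level up: the logarithmic $k$-jet space $J_k(A)$ is itself a (trivial $\mathbb{C}^{nk}$-bundle) semi-abelian extension of $A$ and admits a natural smooth compactification $\overline{J_k(A)}$. I would compactify $X_k(f)$ inside this space and follow Noguchi--Winkelmann--Yamanoi's strategy: construct sufficiently many logarithmic jet differentials on $\overline{X_k(f)}$ twisted by multiples of $\omega_{\bar{Z}}$, evaluate them on $J_kf$, and reduce $T_{J_kf,\sigma}(r,\omega_{\bar{Z}})$ to $N^{[1]}_{J_kf,\sigma}(r,Z)$ through a finite cascade of parabolic LDL applications, each absorbed by~\eqref{assumption of weight euler charactersistic}. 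For part~(2), when $\codim_{X_k(f)}Z \geqslant 2$, one covers $\bar Z$ by intersections of pairs of divisors $\bar D_1 \cap \bar D_2$ on $\overline{X_k(f)}$ and applies part~(1) to each; the elementary bound $N^{[1]}_{J_kf,\sigma}(r,Z) \leqslant N^{[1]}_{J_kf,\sigma}(r,D_i)$ combined with the First Main Theorem forces a cancellation which squeezes $T_{J_kf,\sigma}(r,\omega_{\bar Z})$ into $o(T_{f,\sigma}(r))\,\parallel$.

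The main obstacle is the bookkeeping. One must verify that Noguchi--Winkelmann--Yamanoi's argument, which proceeds by an induction on dimension interwoven with a passage to progressively higher-order jets, invokes the LDL only a \emph{bounded} number of times throughout, so that no unbounded multiplicative factor of $\mathfrak{X}_\sigma(r)$ ever builds up; only then does hypothesis~\eqref{assumption of weight euler charactersistic} suffice. Once this quantitative point is settled, the algebraic and geometric core of their proof -- semi-abelian equivariance of $J_k(A)$, jet differential construction on $\overline{X_k(f)}$, and the inductive dimension reduction -- transfers essentially verbatim to the parabolic framework, completing all three parts.
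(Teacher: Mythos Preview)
Your final paragraph is exactly right: the paper's proof is precisely ``transfer \cite[Section~6.5]{Noguchi-Winkelmann2014} verbatim, inserting the parabolic LDL and absorbing the extra $\mathfrak{X}_\sigma(r)$ via~\eqref{assumption of weight euler charactersistic}.'' However, the concrete sketches you give for parts (1)--(3) do not match that argument and contain real gaps.

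For part (3), bounding $m_{f,\sigma}(r,\overline{D})$ by LDL and invoking the First Main Theorem yields only $T_{f,\sigma}(r,L(\overline{D})) \leqslant N_{f,\sigma}(r,D) + o(T_{f,\sigma}(r))$, with the \emph{untruncated} counting function. The whole content of the theorem is the truncation to level~$1$, and nothing in your proximity-function argument touches that. In the actual proof one first obtains truncation at some finite level~$\ell_0$ (this is Lemma~\ref{smt for jet lifts}, the parabolic analogue of \cite[Theorem~6.5.6]{Noguchi-Winkelmann2014}), and then improves $\ell_0$ to $1$ via the inequality
\[
N^{[\ell_0]}_{J_kf,\sigma}(r,Z)\leqslant N^{[1]}_{J_kf,\sigma}(r,Z)+\ell_0\sum_{i<j}N^{[1]}_{J_kf,\sigma}(r,Z_i\cap Z_j)+\ell_0\sum_i N^{[1]}_{J_{k+1}f,\sigma}\big(r,J_1(Z_i)\big)+o\big(T_{f,\sigma}(r)\big),
\]
killing the extra terms with the codimension-$2$ estimate (part (2)) applied both in $X_k(f)$ and, after passing to $(k{+}1)$-jets, in $X_{k+1}(f)$. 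So parts (1) and (2) are logically intertwined, and the level-$1$ truncation in (1) is obtained \emph{from} (2), not independently.

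For part (2), your proposed route---cover $\bar Z$ by $\bar D_1\cap\bar D_2$, apply part~(1) to each $D_i$, and force a ``cancellation''---does not work: applying part~(1) to $D_i$ gives $T_{J_kf,\sigma}(r,\omega_{\bar D_i})\leqslant N^{[1]}_{J_kf,\sigma}(r,D_i)+o(T_{f,\sigma}(r))$, and there is no mechanism by which $N^{[1]}_{J_kf,\sigma}(r,Z)\leqslant N^{[1]}_{J_kf,\sigma}(r,D_i)$ yields a bound on $T_{J_kf,\sigma}(r,\omega_{\bar Z})$. The genuine argument (the parabolic analogue of \cite[Theorem~6.5.17]{Noguchi-Winkelmann2014}) exploits the semi-abelian structure: one finds a positive-dimensional subtorus $B\subset\St^0_A(X_k(f))$ with $T_{q^B\circ f,\sigma}(r)=o(T_{f,\sigma}(r))$, reduces to a splitting $A=B\times C$, and runs an induction on $\dim Z$ combined with a growth estimate showing $(\ell+1)\,N^{[1]}_{J_kf,\sigma}(r,Z^{\mathrm{ns}})\leqslant n(\ell)\,O(T_{f,\sigma}(r))+o(T_{f,\sigma}(r))$ with $n(\ell)/\ell\to 0$. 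None of this is visible in your sketch. If you simply follow \cite[Section~6.5]{Noguchi-Winkelmann2014} step by step and check the LDL bookkeeping as you correctly flagged, the proof goes through; but the specific shortcuts you proposed do not.
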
 

This together with the First Main Theorem yields the following

\begin{cor}
	\label{tangency not often}
	Let $\mathcal{Y}$ be a parabolic Riemann surface with an exhaustion function $\sigma$. Let $D$ be an effective divisor on  $A:=(\mathbb{C}^*)^n$. Let $f\colon\mathcal{Y}\rightarrow A$ be an algebraically nondegenerate holomorphic map. Then there exists a smooth compactification of $A$ independent
	of $f$, such that
	\[
	N_{f,\sigma}(r,D)
	-
	N^{[1]}_{f,\sigma}(r,D)
	= o\Big(T_{f,\sigma}\big(r,L(\overline{D})\big)\Big)
	\,\,\parallel.
	\]
	\qed
\end{cor}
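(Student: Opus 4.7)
The plan is to combine the Parabolic First Main Theorem with part (3) of Theorem~\ref{smt-semiabelian, truncation level1}. Using the same smooth compactification of $A=(\mathbb{C}^*)^n$ provided by Theorem~\ref{smt-semiabelian, truncation level1}(3), I would equip the line bundle $L(\overline{D})$ with a Hermitian metric such that the defining section $s$ of $\overline{D}$ satisfies $\|s\|\leqslant 1$ globally; this makes the proximity function $m_{f,\sigma}(r,D)$ nonnegative up to an $O(1)$ constant.

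First I would write down the First Main Theorem in the parabolic setting as stated in the excerpt,
\[
T_{f,\sigma}\big(r,L(\overline{D})\big)\,=\,m_{f,\sigma}(r,D)+N_{f,\sigma}(r,D)+O(1),
\]
valid since $f(\mathcal{Y})\not\subset \mathrm{Supp}(\overline{D})$ by algebraic nondegeneracy. Next, I would invoke Theorem~\ref{smt-semiabelian, truncation level1}(3), which gives the truncation-one Second Main Theorem
\[
T_{f,\sigma}\big(r,L(\overline{D})\big)\,\leqslant\, N^{[1]}_{f,\sigma}(r,D)+o\Big(T_{f,\sigma}\big(r,L(\overline{D})\big)\Big)\quad\parallel.
\]
Substituting the First Main Theorem into the left-hand side yields
\[
m_{f,\sigma}(r,D)+N_{f,\sigma}(r,D)\,\leqslant\, N^{[1]}_{f,\sigma}(r,D)+o\Big(T_{f,\sigma}\big(r,L(\overline{D})\big)\Big)\quad\parallel,
\]
and because $m_{f,\sigma}(r,D)\geqslant O(1)$ by our choice of metric, rearranging gives
\[
N_{f,\sigma}(r,D)-N^{[1]}_{f,\sigma}(r,D)\,\leqslant\, o\Big(T_{f,\sigma}\big(r,L(\overline{D})\big)\Big)\quad\parallel.
\]

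Finally, the reverse bound $N_{f,\sigma}(r,D)-N^{[1]}_{f,\sigma}(r,D)\geqslant 0$ is immediate from the pointwise inequality $\min\{1,\ord_z f^*D\}\leqslant \ord_z f^*D$, which yields the desired two-sided asymptotic. There is really no main obstacle here: the corollary is a mechanical consequence once one has the truncated Second Main Theorem at level one, the whole content being the gap between the full counting function and its truncation at $1$, which measures the total multiplicity excess of $f^*D$. All the deep work is hidden in Theorem~\ref{smt-semiabelian, truncation level1}(3), the translation of the Noguchi--Winkelmann--Yamanoi result to the parabolic setting under assumption~\eqref{assumption of weight euler charactersistic}.
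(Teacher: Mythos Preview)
Your proof is correct and follows exactly the approach indicated in the paper, which simply remarks that the corollary is obtained by combining Theorem~\ref{smt-semiabelian, truncation level1} with the First Main Theorem. The only detail you add beyond the paper's one-line justification is the (standard) normalization $\|s\|\leqslant 1$ to ensure nonnegativity of the proximity function, which is appropriate.
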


The proof of Theorem~\ref{smt-semiabelian, truncation level1} will be reached later in this section by implementing some modifications along the strategy of \cite{Noguchi-Yamanoi-Winkelmann2008}. First, we translate \cite[Lemma~4.7.1]{Noguchi-Winkelmann2014} to the parabolic context directly. 

\begin{lem}
	\label{parabolic lemma on logarithmic form}
	Let $M$ be a complex projective manifold and let $D$ be a reduced divisor on $M$. Let $f\colon\mathcal{Y}\rightarrow M$ be a holomorphic curve from a parabolic Riemann surface $\mathcal{Y}$ with an exhaustion function $\sigma$ into $M$ such that $f(\mathcal{Y})\not\subset D$. Let $\omega$ be a logarithmic (along $D$) $k$-jet differential  on $M$. Put $\xi:=\omega(J_kf)$. Then
	\[
	m_{\xi,\sigma}(r)\leqslant \mathfrak{S}_{f,\sigma}(r)+C\,\mathfrak{X}_\sigma(r)=o\big(T_{f,\sigma}(r)\big)
	\,\,\parallel,
	\]
where $\mathfrak{S}_{f,\sigma}(r)$ is a small term such that for any $\delta>0$,
\[
\mathfrak{S}_{f,\sigma}(r)=O\big(\log T_{f,\sigma}(r)\big)+\delta\,\log r
\,\,\parallel.
\]
\qed
\end{lem}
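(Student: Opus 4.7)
The argument is a parabolic adaptation of the classical derivation of the logarithmic jet derivative lemma (cf.~\cite[Lemma~4.7.1]{Noguchi-Winkelmann2014}), with the Parabolic Logarithmic Derivative Lemma stated above replacing its Euclidean counterpart. First, I would exploit the local algebraic structure of logarithmic jet differentials: cover $M$ by finitely many coordinate charts $\{U_\alpha\}$ in which the reduced divisor $D$ has the form $z_1\cdots z_{\ell_\alpha}=0$. The sheaf of logarithmic $k$-jet differentials on $U_\alpha$ is generated over $\mathcal O_{U_\alpha}$ by monomials in the symbols $d^j\log z_i$ ($1\le i\le\ell_\alpha$, $1\le j\le k$) and $d^j z_i$ ($i>\ell_\alpha$, $1\le j\le k$). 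Setting $f_i:=z_i\circ f$, the meromorphic function $\xi=\omega(J_k f)$ restricted to $f^{-1}(U_\alpha)$ becomes a polynomial with bounded holomorphic coefficients (in $f$) in the iterated ordinary or logarithmic derivatives of the $f_i$, so that
\[
\log^+|\xi|\;\le\;C_1\sum_{i,j}\log^+\!\bigl|(\log f_i)^{(j)}\bigr|+C_1\sum_{i,j}\log^+|f_i^{(j)}|+C_2
\]
on $f^{-1}(U_\alpha)$, the sums being finite and depending only on $\omega$.

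Second, I would iterate the Parabolic Logarithmic Derivative Lemma. Each term $(\log f_i)^{(j)}$ unfolds, via the Fa\`a di Bruno / chain-rule identity, into a universal polynomial in the lower-order ratios $f_i^{(s)}/f_i$, and each such ratio is itself a telescoping product of elementary logarithmic derivatives $h'/h$. Applying \eqref{ldl} once per elementary factor produces an error of the shape $O(\log T_{f_i,\sigma}(r))+\delta\log r+\mathfrak{X}^+_\sigma(r)$, $\parallel$. Since $f_i$ is a coordinate component of $f$ in a fixed chart, the parabolic First Main Theorem gives $T_{f_i,\sigma}(r)=O(T_{f,\sigma}(r))+O(1)$, so—after recording that $\mathfrak X^+_\sigma(r)=\mathfrak X_\sigma(r)+O(\log r)$—the accumulated error takes precisely the claimed form $\mathfrak S_{f,\sigma}(r)+C\,\mathfrak X_\sigma(r)$, with a constant $C$ depending only on the order $k$ and the weight of $\omega$.

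Third, to pass from local to global, I would multiply by $\rho_\alpha\circ f$ for a partition of unity $\{\rho_\alpha\}$ subordinate to $\{U_\alpha\}$, integrate the local bounds over $S^\sigma_r$, and sum over $\alpha$. This produces the global estimate
\[
m_{\xi,\sigma}(r)\;\le\;\mathfrak S_{f,\sigma}(r)+C\,\mathfrak X_\sigma(r)\qquad\parallel.
\]
The final equality $=o(T_{f,\sigma}(r))$ is then immediate from the standing assumption~\eqref{assumption of weight euler charactersistic} of this section, together with the elementary facts that both $\log T_{f,\sigma}(r)$ and $\log r$ are $o(T_{f,\sigma}(r))$ for a transcendental parabolic curve (handled, as usual, by a calculus lemma modulo an exceptional set of finite Lebesgue measure, which is exactly the content of the symbol $\parallel$).

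The main technical obstacle will be the bookkeeping in Step~2: unlike in the Euclidean setting, each invocation of the parabolic logarithmic derivative lemma contributes an \emph{extra} $\mathfrak X^+_\sigma(r)$-term, and when expanding $(\log f_i)^{(j)}$ into up to $j$ nested elementary logarithmic derivatives one must verify that the total number of invocations—hence the multiplicative constant $C$ in front of $\mathfrak X_\sigma(r)$—depends only on $k$ and the weight of $\omega$, and not on $r$ or on the curve $f$. Once this is in hand, everything else is a faithful transcription of the compact-target argument of Noguchi--Winkelmann--Yamanoi to the parabolic setting.
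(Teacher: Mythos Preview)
Your proposal is correct and is precisely the direct parabolic translation of \cite[Lemma~4.7.1]{Noguchi-Winkelmann2014} that the paper invokes; the paper itself gives no proof beyond the citation and the \qed, so your sketch simply fills in the details the authors left implicit. The only point worth flagging is that your final $o(T_{f,\sigma}(r))$ step tacitly uses the standing hypothesis~\eqref{assumption of weight euler charactersistic}, exactly as the paper intends for all results in this section.
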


For an integer $k\geqslant 0$, let $J_k(A)$ denote the $k$-jet space of $A=(\mathbb{C}^*)^n$, which reads as
\[
J_k(A)=A\times J_{k,A}=A\times \mathbb{C}^{nk}.
\]
 There is a natural $A$-action on $J_k(A)$ given   by $a\colon (x,v)\rightarrow (x+a,v)$ for all $x\in A,v\in\mathbb{C}^{nk}$, where ``$+$'' is understood as multiplication. Denote by $J_kf$ the $k$--jet lifting of $f$ and by $X_k(f)$ the Zariski closure of $J_kf$ in the $k$-jet space $J_k(A)$. Let $B:=\St_A\big(X_k(f)\big)$ be the stabilizer group with respect to the natural $A$-action and let $q\colon A\rightarrow A/B$ be the quotient map. Then the jet projection method \cite[Theorem~6.2.6]{Noguchi-Winkelmann2014} together with Lemma~\ref{parabolic lemma on logarithmic form} yield $T_{q\circ f,\sigma}(r)=o\big(T_{f,\sigma}(r)\big)$. Moreover, we can assume $\dim B >0$, otherwise we would get $T_{f,\sigma}(r)=o\big(T_{f,\sigma}(r)\big)$, which is impossible. 

We will first establish a Second Main Theorem for jet liftings. Let $Z$ be an algebraic reduced subvariety of $X_k(f)$. Let $B^0=\St^0_A\big(X_k(f)\big)$ denote the identity component of $B$. Then 
\begin{equation}
\label{key difference}
\dim B^0>0 \quad\text{and}\quad T_{q^{B^0}_k\circ J_kf,\sigma}(r)=o\big(T_{f,\sigma}(r)\big)
\,\,\parallel,
\end{equation}
where $q^{B^0}_k\colon J_k(A)\rightarrow J_k(A)/B^0\cong (A/B^0)\times J_{k,A}$ is the quotient map. This corresponds to \cite[Equation~(6.5.9)]{Noguchi-Winkelmann2014} and hence, we can translate \cite[Theorem~6.5.6]{Noguchi-Winkelmann2014} to the parabolic setting as follows.

\begin{lem}
	\label{smt for jet lifts}
	There exists a compactification $\bar{X}_k(f)$ of $X_k(f)$,  and a positive integer $\ell_0$  such that
	\begin{align}
	m_{J_kf,\sigma}(r,\bar{Z})
	&=o\big(T_{f,\sigma}(r)\big)
	\,\,\parallel,\notag\\
	T_{J_kf,\sigma}(r,\omega_{\bar{Z}})
	&\leqslant
	N^{[\ell_0]}_{J_kf,\sigma}(r,Z)
	+
	o\big(T_{f,\sigma}(r)\big)
	\,\,\parallel,\notag
	\end{align}
	where $\bar{Z}$ denotes the closure of $Z$ in $\bar{X}_k(f)$.	\qed
\end{lem}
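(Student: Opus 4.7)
My plan is to follow the proof of \cite[Theorem~6.5.6]{Noguchi-Winkelmann2014} verbatim and replace each invocation of the classical logarithmic derivative lemma by its parabolic counterpart Lemma~\ref{parabolic lemma on logarithmic form}, then absorb the resulting $\mathfrak{X}_{\sigma}(r)$ contributions into $o\big(T_{f,\sigma}(r)\big)$ via the standing assumption \eqref{assumption of weight euler charactersistic}. The point is that Lemma~\ref{parabolic lemma on logarithmic form} is designed to say exactly this: whenever a logarithmic $k$-jet differential $\omega$ on a smooth projective $M\supset f(\mathcal{Y})$ is contracted along $J_k f$, the proximity of the resulting meromorphic function is $o\big(T_{f,\sigma}(r)\big)\ \parallel$, which is the analytic input used repeatedly in \cite{Noguchi-Winkelmann2014}.

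\textbf{Construction of the compactification and the first estimate.} First I would fix a smooth equivariant projective compactification $\bar{A}$ of $A=(\mathbb{C}^*)^n$ with simple normal crossing boundary $\partial A$, and then take $\bar{J}_k(A)$ to be the canonical log-jet compactification of $J_k(A)\to A$ built from $(\bar{A},\partial A)$. Set $\bar{X}_k(f)$ to be the closure of $X_k(f)$ inside $\bar{J}_k(A)$ and $\bar{Z}$ to be the closure of $Z$. Around each boundary point of $\bar{X}_k(f)$ one can cover a neighborhood of $\bar{Z}$ by finitely many local defining functions that, after multiplication by a power of the norm, are bounded by quotients of logarithmic $k$-jet forms on $(\bar{A},\partial A)$; applying Lemma~\ref{parabolic lemma on logarithmic form} to each such form and taking sums yields $m_{J_k f,\sigma}(r,\bar{Z}) = o\big(T_{f,\sigma}(r)\big)\ \parallel$, which is the first asserted inequality.

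\textbf{The truncated SMT for the jet lifting.} For the second inequality I would reproduce the jet-projection/Wronskian construction of \cite[Theorem~6.5.6]{Noguchi-Winkelmann2014}. Using \eqref{key difference}, replace $J_k f$ by its image under the quotient $q_k^{B^0}\colon J_k(A)\to J_k(A)/B^0$; on the quotient one can, after possibly passing to a positive multiple of $\omega_{\bar{Z}}$, produce sections that vanish along $\bar{Z}$ to order $\geqslant 1$ and whose pullbacks under $J_k f$ admit Wronskian-type factorizations of the form $W\circ J_k f$, with $\mathrm{ord}_z (W\circ J_k f)\geqslant \mathrm{ord}_z (J_k f)^*Z - \ell_0$ at each intersection point, for some uniform $\ell_0$. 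Combining the parabolic First Main Theorem with Lemma~\ref{parabolic lemma on logarithmic form} in place of the compact logarithmic derivative lemma, and using \eqref{key difference} to absorb the $T_{q_k^{B^0}\circ J_k f,\sigma}(r)$ terms, gives
\[
T_{J_k f,\sigma}(r,\omega_{\bar{Z}}) \leqslant N^{[\ell_0]}_{J_k f,\sigma}(r,Z) + \mathfrak{S}_{f,\sigma}(r) + C\,\mathfrak{X}_{\sigma}(r) \qquad \parallel,
\]
and the right-most error is $o\big(T_{f,\sigma}(r)\big)\ \parallel$ by \eqref{assumption of weight euler charactersistic}.

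\textbf{Main obstacle.} The delicate point is not the analysis, which is dictated by Lemma~\ref{parabolic lemma on logarithmic form} and \eqref{assumption of weight euler charactersistic}, but the bookkeeping of choosing a single compactification $\bar{X}_k(f)$ and a single truncation level $\ell_0$ that work simultaneously for all reduced subvarieties $Z$; this is precisely what is accomplished in the semi-abelian jet-projection machinery of \cite{Noguchi-Yamanoi-Winkelmann2008}, and I would import that construction rather than redo it. Once it is granted, only the substitution \emph{classical LDL $\rightsquigarrow$ parabolic LDL plus Lemma~\ref{parabolic lemma on logarithmic form}} remains, and the lemma follows.
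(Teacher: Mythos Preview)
Your proposal is correct and matches the paper's approach exactly: the paper gives no proof at all (note the \qed\ immediately after the statement) and simply asserts that the lemma is the parabolic translation of \cite[Theorem~6.5.6]{Noguchi-Winkelmann2014}, enabled by Lemma~\ref{parabolic lemma on logarithmic form} and the key input~\eqref{key difference}. Your write-up is in fact more detailed than what the paper provides; the one minor overreach is worrying about uniformity in $Z$---the subvariety $Z$ is fixed in the setup, so $\bar{X}_k(f)$ and $\ell_0$ may depend on it.
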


Our next goal is to show that the ``impact'' of $J_kf$ on a subvariety of $X_k(f)$ with codimension $\geqslant 2$ is relatively small.

\begin{lem}
	\label{smt for higher comdim}
	Let $Z\subset X_k(f)$ be a subvariety with $\codim_{X_k(f)}Z\geqslant 2$. Then 
	\begin{equation}
	T_{J_kf,\sigma}(r,\omega_{\bar{Z}})
	=o\big(T_{f,\sigma}(r)\big)
	\,\,\parallel.
	\end{equation}
	In particular one has
	\begin{equation}
	N_{J_kf,\sigma}(r,Z) =o\big(T_{f,\sigma}(r)\big)
	\,\,\parallel.
	\end{equation}
\end{lem}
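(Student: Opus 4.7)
The strategy is to bootstrap from the codimension-one statement of Lemma \ref{smt for jet lifts} by exploiting the positive-dimensional stabilizer $B^0$ whose existence is guaranteed by \eqref{key difference}. This is the parabolic analogue of the argument behind \cite[Theorem~6.5.1]{Noguchi-Winkelmann2014} and \cite{Noguchi-Yamanoi-Winkelmann2008}, and the geometric intuition is that a Zariski-dense entire curve in $X_k(f)$, whose closure has a non-trivial translation symmetry $B^0$, cannot concentrate tangential contact along a proper subvariety of codimension at least two.

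First I would work inside the compactification $\bar{X}_k(f)$ supplied by Lemma \ref{smt for jet lifts} and fix a sufficiently ample line bundle $L$ on it. By a Bertini-type argument I choose two generic divisors $D_{1}, D_{2} \in |L^{\otimes m}|$ whose scheme-theoretic intersection $D_1 \cap D_2$ is of pure codimension two and contains $\bar Z$ as an irreducible component; in this way the characteristic function $T_{J_k f,\sigma}(r, \omega_{\bar Z})$ is dominated, intersection-theoretically, by the ``product'' $T_{J_k f,\sigma}(r, L^{\otimes m})$ restricted to $D_1$. Applying Lemma \ref{smt for jet lifts} to each $D_i$ yields, modulo $o(T_{f,\sigma}(r))$, a bound in terms of the truncated counting functions $N^{[\ell_0]}_{J_kf,\sigma}(r, D_i)$.

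Next I would translate $D_1$ by elements $a$ of $B^0$. Since $B^0$ acts on $X_k(f)$ and has positive dimension, for generic $a$ the translate $a \cdot D_1$ remains transverse to $D_2$, and the family $\{a\cdot D_1\}_{a\in B^0}$ foliates a full-dimensional piece of $\bar{X}_k(f)$. A Crofton/Fubini averaging of the previous bound against a compactly supported measure on $B^0$, combined with the smallness \eqref{key difference} of $T_{q^{B^0}_k\circ J_kf,\sigma}(r)$, extracts the contribution of $\bar Z$ from $N^{[\ell_0]}_{J_kf,\sigma}(r, D_1)$ and shows that this contribution is itself $o(T_{f,\sigma}(r))$; this delivers the first assertion. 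The second assertion, $N_{J_kf,\sigma}(r, Z) = o(T_{f,\sigma}(r))$, then drops out immediately from the Parabolic First Main Theorem applied to any divisor containing $Z$, since the corresponding order function has just been shown to be $o(T_{f,\sigma}(r))$.

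The main obstacle I expect is making the averaging over $B^0$ rigorous in the parabolic regime: each invocation of Lemma \ref{smt for jet lifts} (and hence of the Parabolic Logarithmic Derivative Lemma upstream of it) accumulates an error of the form $C\,\mathfrak{X}_\sigma(r) + \mathfrak{S}_{f,\sigma}(r)$, and one must verify that assumption \eqref{assumption of weight euler charactersistic} keeps these uniform as $a$ varies over a compact subset of $B^0$. A secondary technical point is to pin down a meaningful notion of $\omega_{\bar Z}$ for the codimension-$\geq 2$ closure $\bar Z$ (for instance via the intersection of the Chern forms of the $D_i$, or via a blow-up along $\bar Z$) that is compatible with both Lemma \ref{smt for jet lifts} and the averaging construction; once this is fixed, the estimates from the divisor case transfer cleanly.
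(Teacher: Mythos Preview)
Your proposal has a genuine gap at the averaging step. The inequality supplied by Lemma~\ref{smt for jet lifts} goes in the direction
\[
T_{J_kf,\sigma}(r,\omega_{\overline{D_i}})\ \leqslant\ N^{[\ell_0]}_{J_kf,\sigma}(r,D_i)+o\big(T_{f,\sigma}(r)\big),
\]
so averaging $N^{[\ell_0]}_{J_kf,\sigma}(r,a\cdot D_1)$ over $a\in B^0$ can at best reproduce an order-function quantity (a Crofton identity), not an \emph{upper} bound isolating the codimension-two piece $\bar Z\subset D_1\cap D_2$. The smallness of $T_{q_k^{B^0}\circ J_kf,\sigma}(r)$ in~\eqref{key difference} says only that the image of $J_kf$ is thin in the quotient $X_k(f)/B^0$; it gives no direct control on how often $J_kf$ meets a fixed $B^0$-unstable subvariety of $X_k(f)$. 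Your sketch therefore lacks any mechanism to pass from ``$Z$ sits in $D_1\cap D_2$'' to ``$N^{[1]}_{J_kf,\sigma}(r,Z)$ is small''.

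The paper's argument (following \cite[Theorem~6.5.17]{Noguchi-Winkelmann2014}) is structurally different. After the splitting reduction $A=B\times C$, one uses Lemma~\ref{smt for jet lifts} to reduce the first assertion to
\[
N^{[1]}_{J_kf,\sigma}(r,Z)=o\big(T_{f,\sigma}(r)\big),
\]
and then attacks this counting function directly, by induction on $\dim Z$ (reducing to the smooth locus $Z^{\ns}$). The real engine is \emph{higher} jet lifts: a point where $J_kf$ has contact order $\geqslant \ell+1$ with $Z^{\ns}$ forces $J_{k+j}f$, for $j\leqslant \ell$, into a nested chain of subvarieties of $X_{k+j}(f)$ whose codimension strictly grows, because the stabilizer $B^0$ does not preserve $Z$. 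Applying Lemma~\ref{smt for jet lifts} at each jet level yields a sequence $n(\ell)$ with $n(\ell)/\ell\to 0$ and
\[
(\ell+1)\,N^{[1]}_{J_kf,\sigma}(r,Z^{\ns})\ \leqslant\ n(\ell)\,O\big(T_{f,\sigma}(r)\big)+o\big(T_{f,\sigma}(r)\big),
\]
which gives the claim upon letting $\ell\to\infty$. This jet-stratification device is what is missing from your plan; the two-divisor Bertini construction and $B^0$-averaging do not substitute for it.
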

\begin{proof}
This result is an analog of \cite[Theorem~6.5.17]{Noguchi-Winkelmann2014}. Our proof follows the same lines, except a necessary modification in the first reduction. We reduce to the case that $A$ admits a splitting $A=B\times C$ for $B,C$ being semi-abelian varieties of positive dimensions with
\begin{align*}
B\subset\St^0_A\big(X_{k}(f)\big)
\quad
{\scriptstyle ( k\,\geqslant \,0)},
\quad
 T_{q^B\circ f,\sigma}(r)=o\big(T_{f,\sigma}(r)\big)
\,\,\parallel,
\end{align*}
where $q^B\colon A\rightarrow A/B=C$ denotes the projection to the second factor. To do this, we consider the
set of all semi-abelian subvarieties $B\subset A$ such that
\[
T_{q^B\circ f,\sigma}(r)=o\big(T_{f,\sigma}(r)\big)
\,\,\parallel.
\]
We then use \eqref{key difference} and repeat the argument in the proof of \cite[Theorem~6.5.17]{Noguchi-Winkelmann2014}. Note that since we only work with $A=(\mathbb{C}^*)^n$ instead of universal coverings of semi-abelian varieties, the result in \cite[Lemma~6.5.25]{Noguchi-Winkelmann2014} automatically holds. By Lemma~\ref{smt for jet lifts}, it suffices to show that
\[
N^{[1]}_{J_kf,\sigma}(r,Z)=o\big(T_{f,\sigma}(r)\big)
\,\,\parallel.
\]

By induction on the dimension of $Z$, it suffices to check the above estimate for the nonsingular part $Z^{\ns}$ of $Z$. Following the same lines as in \cite[6.5.3]{Noguchi-Winkelmann2014}, we can find a sequence $n(\ell)$ such that $\lim_{\ell\rightarrow\infty}\frac{n(\ell)}{\ell}=0$ and
\[
(\ell+1)\, N^{[1]}_{J_kf,\sigma}(r,Z^{ns})
\leqslant 
n(\ell)\,O\big(T_{f,\sigma}(r)\big)+o\big(T_{f,\sigma}(r)\big)
\,\,\parallel,
\]
which yields the required estimate. This finishes the proof of the Lemma~\ref{smt for higher comdim}.
\end{proof}

\medskip\noindent
\begin{proof}[Proof of Theorem ~\ref{smt-semiabelian, truncation level1}] We follow the argument in~\cite[Section~6.5.4]{Noguchi-Winkelmann2014}. It suffices to consider the case where $Z$ is a reduced Weil divisor on $X_k(f)$ with the  irreducible decomposition $Z=\sum_i Z_i$. Using Lemma~\ref{smt for jet lifts}, we have
\begin{align}
\label{basic start estimate to get truncation level 1}
\quad\quad T_{J_kf,\sigma}(r,\omega_{\bar{Z}})
&\leqslant
N^{[\ell_0]}_{J_kf,\sigma}(r,Z)
+
o\big(T_{f,\sigma}(r)\big)
\,\,\parallel,\notag\\
&\leqslant
N^{[1]}_{J_kf,\sigma}(r,Z)
+
\ell_0\sum_{i<j}N^{[1]}_{J_kf,\sigma}(r,Z_i\cap Z_j)
+
\ell_0\sum_i N^{[1]}_{J_{k+1}f,\sigma}\big(r,J_1(Z_i)\big)
+
o\big(T_{f,\sigma}(r)\big)
\,\,\parallel.
\end{align}

Since $\codim_{X_k(f)}(Z_i\cap Z_j)\geqslant 2$ for $i\not=j$, the second term in the right hand side of \eqref{basic start estimate to get truncation level 1} can be estimated by Lemma~\ref{smt for higher comdim} as
\[
\ell_0\sum_{i<j}N^{[1]}_{J_kf,\sigma}(r,Z_i\cap Z_j)
=o\big(T_{f,\sigma}(r)\big).
\]

We now treat the third term of \eqref{basic start estimate to get truncation level 1}. We consider two cases depending on the position of $B_{k+1}^0:=\St_A^0\big(X_{k+1}(f)\big)$with respect to $\St^0_A(Z_i)$.

\medskip
Case~(1):  $B_{k+1}^0\not\subset\St^0_A(Z_i)$. We have (\cite[Lem. 6.6.50]{Noguchi-Winkelmann2014}):
	\[
	\codim_{X_{k+1}(f)}\big(X_{k+1}(f)\cap J_1(Z_i)\big)\geqslant 2,
	\]
	where we can apply Lemma~\ref{smt for higher comdim} to obtain
	\[
	N^{[1]}_{J_{k+1}f,\sigma}\big(r,J_1(Z_i)\big)
	=
	o\big(T_{f,\sigma}(r)\big).
	\]

Case~(2): $B_{k+1}^0\subset\St^0_A(Z_i)$ We consider the quotient map $q_{k}^{B^0_{k+1}}\colon X_k(f)\rightarrow X_k(f)/B^0_{k+1}$. The image of $Z_i$ under this map is contained in a divisor on $X_k(f)/B^0_{k+1}$, and hence, we can argue as in \cite[Thm. 6.5.6, case (a)]{Noguchi-Winkelmann2014} to get
	\[
	N^{[1]}_{J_{k+1}f,\sigma}\big(r,J_1(Z_i)\big)
	\leqslant
	N_{J_{k+1}f,\sigma}\big(r,J_1(Z_i)\big)
	=o\big(T_{f,\sigma}(r)\big).
	\]

This finishes the proof of Theorem~\ref{smt-semiabelian, truncation level1}.
\end{proof}

\medskip

A family $\{D_i\}_{i=1}^q$ of $q\geqslant n+2$ hypersurfaces in $\mathbb{P}^n(\mathbb{C})$ is said to be {\sl in general position} if any $n+1$ hypersurfaces in this family have empty intersection, namely
\[
\medcap_{i\in I}D_i=\varnothing
\qquad
{\scriptstyle ( I\,\subset\,\{1,\,2,\,\dots,\,q\},\, |I|\,=\,n+1)}.
\]
In \cite{Minru2004}, the author confirms a conjecture of Shiffman by extending the classical Cartan's Second Main Theorem to the case of nonlinear targets. In the parabolic context, the result reads as follows.

\begin{thm}
	\label{Min ru smt-non linear targets in parabolic setting}
	Let $\mathcal{Y}$ be a parabolic Riemann surface with a smooth exhaustion function $\sigma$. Let $\{D_i\}_{i=1}^q$ be a family of $q\geqslant n+2$ hypersurfaces in general position in $\mathbb{P}^n(\mathbb{C})$. Then for any algebraically nondegenerate holomorphic curve $f\colon\mathcal{Y}\rightarrow\mathbb{P}^n(\mathbb{C})$, there exists a positive constant $C$ such that 
	\[
	(q-n-1)\,T_{f,\sigma}(r)\leqslant
	\sum_{i=1}^q \dfrac{N_{f,\sigma}(r,D_i)}{\deg(D_i)}
	+
	C\,\mathfrak{X}_\sigma(r)
	+
	o\big(T_{f,\sigma}(r)\big)
	\,\,\parallel.
	\]
	\qed
\end{thm}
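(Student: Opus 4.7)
The plan is to adapt Ru's projective proof~\cite{Minru2004} of the Second Main Theorem for hypersurfaces in general position to the parabolic setting; the only substantial new ingredient is the Parabolic Logarithmic Derivative Lemma, which is precisely what introduces the additive $C\,\mathfrak{X}_\sigma(r)$ term.

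The first step is to reduce to the equal-degree case. Put $d := \mathrm{lcm}(d_1,\dots,d_q)$ with $d_i := \deg D_i$, let $Q_i$ be a defining polynomial of $D_i$, and let $D_i'$ be the divisor cut out by $Q_i^{d/d_i}$, of degree $d$. Then $N_{f,\sigma}(r,D_i') = (d/d_i)\,N_{f,\sigma}(r,D_i)$, so the inequality to be proved is equivalent to
\[
d(q-n-1)\,T_{f,\sigma}(r) \leqslant \sum_{i=1}^q N_{f,\sigma}(r,D_i') + C\,\mathfrak{X}_\sigma(r) + o\big(T_{f,\sigma}(r)\big) \qquad\parallel.
\]
Next, invoke the $d$-th Veronese embedding $\phi_d \colon \mathbb{P}^n(\mathbb{C}) \hookrightarrow \mathbb{P}^N(\mathbb{C})$ with $N+1 = \binom{n+d}{d}$, under which each $D_i'$ becomes a hyperplane section $H_i \cap \phi_d(\mathbb{P}^n)$. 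The lifted curve $F := \phi_d \circ f \colon \mathcal{Y} \to \mathbb{P}^N(\mathbb{C})$ satisfies $T_{F,\sigma}(r) = d\,T_{f,\sigma}(r) + O(1)$ and $N_{F,\sigma}(r,H_i) = N_{f,\sigma}(r,D_i')$. Since $f$ is algebraically nondegenerate and the image $\phi_d(\mathbb{P}^n)$ linearly spans $\mathbb{P}^N(\mathbb{C})$, the curve $F$ is linearly nondegenerate.

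The second step is to apply a parabolic analogue of the Cartan--Nochka Second Main Theorem to $F$ and the hyperplanes $\{H_i\}_{i=1}^q$. Although the $H_i$ themselves are typically not in general position in $\mathbb{P}^N(\mathbb{C})$, the general position hypothesis on $\{D_i\}$ in $\mathbb{P}^n(\mathbb{C})$ ensures that $\{H_i\}$ is in $n$-subgeneral position with respect to the Veronese variety, which is exactly the setting in which Nochka weights $0 < \omega_i \leqslant 1$ are available. The Wronskian proof of Cartan--Nochka transports verbatim once one replaces the classical logarithmic derivative lemma by its parabolic version, and yields
\[
(q-n-1)\,T_{F,\sigma}(r) \leqslant \sum_{i=1}^q \omega_i\,N_{F,\sigma}(r,H_i) + (\text{error}) \qquad\parallel.
\]
Dropping the weights using $\omega_i \leqslant 1$ and dividing through by $d$ gives the sought hypersurface inequality.

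The main obstacle is accounting for the error term. In the projective proof, the Wronskian-based argument invokes the Logarithmic Derivative Lemma a finite number of times (depending only on $N$), each of which contributes only $o(T_{F,\sigma}(r))$. In the parabolic setting, Lemma~\ref{parabolic lemma on logarithmic form} and the Parabolic Logarithmic Derivative Lemma replace each such invocation by a contribution of the form $\mathfrak{S}_{F,\sigma}(r) + O\big(\mathfrak{X}^+_\sigma(r)\big) = o\big(T_{f,\sigma}(r)\big) + O\big(\mathfrak{X}_\sigma(r) + \log r\big)$, which must be summed carefully. Since the number of applications depends only on $n, q, d$ and neither on $f$ nor on $\sigma$, the total contribution is bounded by $C\,\mathfrak{X}_\sigma(r) + o\big(T_{f,\sigma}(r)\big)$ for some constant $C = C(n,q,d) > 0$ (the $\log r$ terms being absorbed into the $o(T_{f,\sigma}(r))$ contribution outside an exceptional set), which yields the stated estimate.
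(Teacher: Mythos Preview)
The paper itself does not give a detailed proof of this theorem; it only remarks that ``the proof follows the same lines as in~\cite{Minru2004}, where the filtration method of Corvaja--Zannier~\cite{Corvaja-Zannier2004} was employed to reduce the problem to the linear case~\cite{Minru1997, Vojta97}.'' Your proposal takes a different route --- Veronese embedding followed by Nochka weights --- and this route has a genuine gap.

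The problem is in your second step. After the $d$-th Veronese embedding the curve $F$ lands in $\mathbb{P}^N(\mathbb{C})$ with $N=\binom{n+d}{d}-1$, and the $H_i$ are hyperplanes in $\mathbb{P}^N(\mathbb{C})$. The classical Nochka weight construction applies to hyperplanes in $m$-subgeneral position \emph{in the ambient} $\mathbb{P}^N(\mathbb{C})$ and yields a Second Main Theorem whose left-hand side is $(q-2m+N-1)\,T_{F,\sigma}(r)$. What you call ``$n$-subgeneral position with respect to the Veronese variety'' --- that any $n+1$ of the $H_i$ have empty intersection with $\phi_d(\mathbb{P}^n)$ --- is \emph{not} a hypothesis under which Nochka weights with the required properties are known to exist; Nochka's combinatorics concerns only linear dependencies among the $H_i$ in $\mathbb{P}^N$, not their trace on a subvariety. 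If instead you use merely that the forms $Q_i^{d/d_i}$ are linearly independent (so the $H_i$ are in general position in $\mathbb{P}^N$ when $q\leqslant N+1$), Cartan's theorem in $\mathbb{P}^N$ gives only $(q-N-1)\,T_{F,\sigma}(r)$ on the left, which is vacuous. This is exactly the obstruction that forced Ru to abandon the na\"ive Veronese reduction: his 2004 argument instead builds, for each $\epsilon>0$ and each large multiple $m$ of $d$, a Corvaja--Zannier filtration of $H^0\big(\mathbb{P}^n,\mathcal{O}(m)\big)$ adapted to the $Q_i$, extracts from it a large family of auxiliary linear forms, and applies Cartan's theorem to those. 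The filtration is what converts the general-position hypothesis on the $D_i$ in $\mathbb{P}^n$ into the correct constant $q-n-1$; there is no shortcut via Nochka. Your treatment of the parabolic error term (a bounded number of invocations of the logarithmic derivative lemma, each contributing $O(\mathfrak{X}_\sigma(r))$) is fine and would graft onto Ru's actual argument without difficulty --- but the core reduction has to be redone along the filtration lines the paper cites.
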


The proof follows the same lines as in \cite{Minru2004}, where the filtration method of Corjava-Zannier \cite{Corvaja-Zannier2004} was employed to reduce the problem to the linear case \cite{Minru1997, Vojta97}.

\section{\bf A smooth exhaustion function on some  parabolic Riemann surface}\label{sect-smoothing}

In this section, we construct a piecewise smooth exhaustion function $\hat{\sigma}$ on the parabolic Riemann surface $\mathbb{C}\backslash\mathcal{E}$, where $\mathcal{E}=\{a_j\}_{j=1}^{\infty}$ is a discrete countable set of points on $\mathbb{C}$. Then we describe a smooth exhaustion function $\sigma$ close to $\hat{\sigma}$. Details are presented in the Appendix.

We arrange $a_j$ so that
\[
|a_1|\leqslant |a_2|\leqslant\dots.
\]
Take $r_j\in(0,1)$ sufficiently small such that
\begin{itemize}
\smallskip
\item[$\bullet$]
the discs $\mathbb{D}(a_j,2\,r_j)$ are disjoint;
\smallskip
\item[$\bullet$] the sum $\sum\limits_{j\geqslant 1}r_j<+\infty$.
\end{itemize}

Let $\mathcal{Y}:=\mathbb{C}\backslash\{a_j\}_{j=1}^{\infty}$ and define a piecewise smooth exhaustion function $\hat{\sigma}:\mathcal{Y}\rightarrow[1,+\infty)$ by
\begin{align}\label{def-exhaustion}
\hat{\sigma}(z):=\exp
\Big(\log^+|z|+\sum\limits_{j=1}^{\infty}r_j\,\log^+\tfrac{r_j}{|z-a_j|}\Big).
\end{align}
In other words
\[
\hat{\tau}:=\log\hat{\sigma}=\log^+|z|+\sum\limits_{j=1}^{\infty}r_j\,\log^+\tfrac{r_j}{|z-a_j|}.
\]
Obviously, the function $\hat{\tau}$
\begin{itemize}
\smallskip
\item[$\bullet$]
takes value in $[0,+\infty)$;
\smallskip
\item[$\bullet$]
is continuous on $\mathcal{Y}$;
\smallskip
\item[$\bullet$]
is smooth, indeed harmonic, outside the circle $S(0,1):=\big\{|z|=1\big\}$ and the disjoint circles $S(a_j,r_j):=\big\{|z-a_j|=r_j\big\}$.
\end{itemize}

By the Poincar\'e-Lelong formula (cf. e.g. \cite[Theorem~2.2.16]{Noguchi-Winkelmann2014}), it is clear that
\begin{equation}\label{eqn:ddctau}
	\dif\dif^c\hat{\tau}=\frac{1}{2}\,\nu(0,1)+\frac{r_j}{2}\sum\limits_{j=1}^{\infty}\big(\nu(a_j,r_j)-\delta_{a_j}\big)
\end{equation}
is a distribution of order $0$ and locally of finite mass. Here $\nu(a_j,r_j)$ is the Haar measure on the circle $S(a_j,r_j)$.

Use the notations
\[
B^{\hat{\sigma}}_r:= \big\{z\in \mathcal Y : \hat{\sigma}(z)< r\big\},\qquad\qquad
S^{\hat{\sigma}}_r:= \big\{z\in \mathcal Y : \hat{\sigma}(z)= r\big\},
\]
for the $\hat{\sigma}$-ball of radius $r$ and its boundary. For $r>0$, the boundary $S^{\hat{\sigma}}_r$ is a piece-wise smooth curve. It has
\[
\#\big\{j~:~|a_j|+r_j<r\big\}+1
\]
many connected components. Non-smooth points come from two cases:
\begin{enumerate}
\smallskip
\item
when $r=|a_j|+r_j$ for some $j$, there is one non-smooth point which is the tangent point of $S(0,r)$ to $S(a_j,r_j)$;
\smallskip
\item
when $r\in\big(|a_j|-r_j,|a_j|+r_j\big)$ for some $j$, there are two non-smooth points which are the intersection points of $S(0,r)$ and $S(a_j,r_j)$.
\end{enumerate}

\begin{figure}[!htbp]
	\begin{center}
		\includegraphics[width=0.3\linewidth]{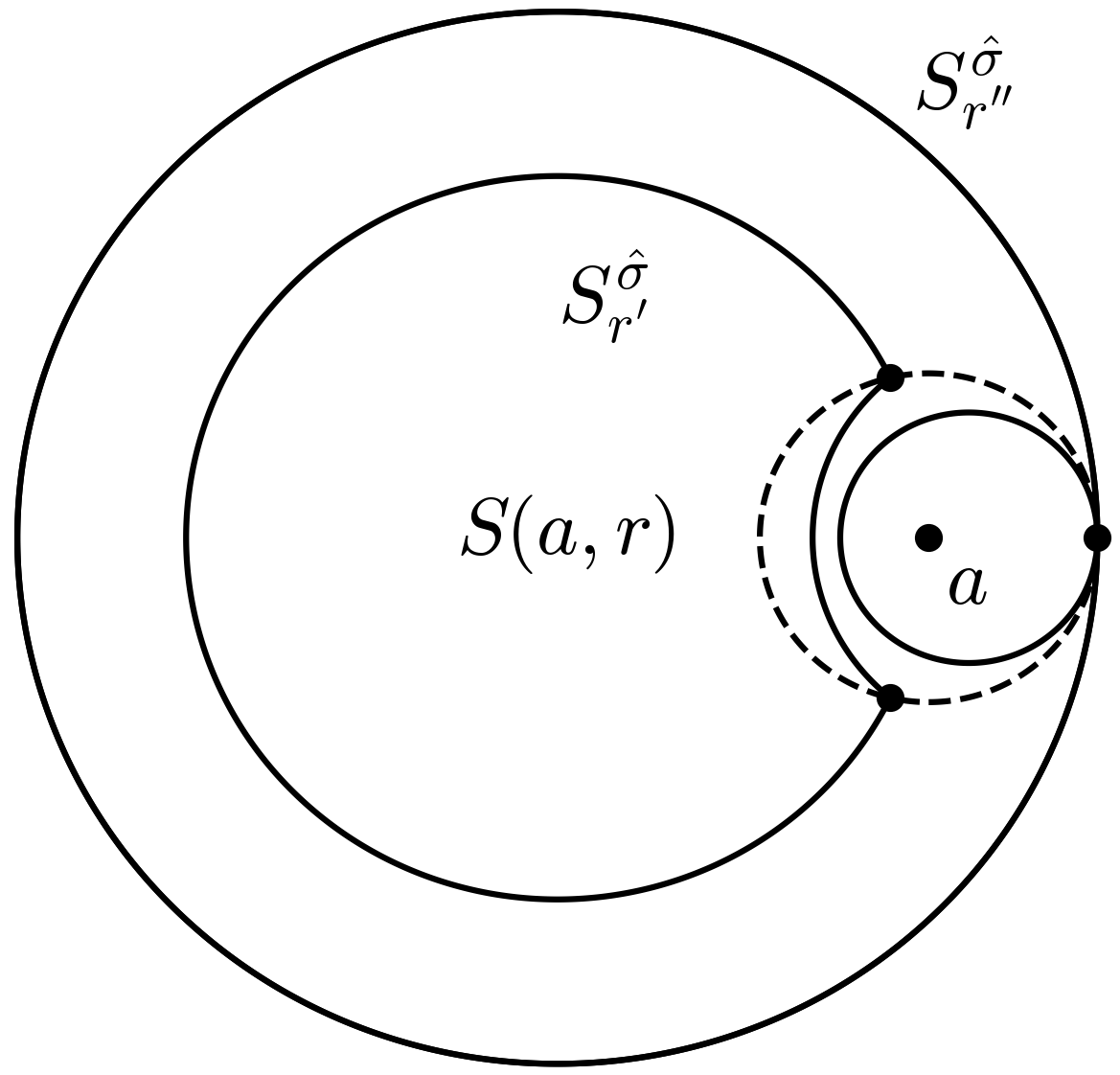} 
		\caption{The non-smooth points on $S^{\hat{\sigma}}_{r'}$ of case ($1$) and on $S^{\hat{\sigma}}_{r'}$ of case ($2$).}
	\end{center}
\end{figure}

Now we describe an exhaustion function $\sigma$ of $\mathcal{Y}$. An explicit construction and the proof of the Lemma property will be given in the Appendix.

\begin{lem}\label{lem-smoothing-1}
There is a smooth exhaustion function $\sigma\geqslant \hat{\sigma}$ such that the difference $\sigma-\hat{\sigma}$ is supported on
\[
\mathrm{Supp}(\sigma-\hat{\sigma})
\subset
U:=
\big(A(0,\tfrac{1}{2},\tfrac{3}{2})\backslash\mathcal{E}\big)\cup
\bigcup_{j=1}^{\infty}A(a_j,\tfrac{1}{2}r_j,\tfrac{3}{2}r_j),
\]
where $A(a_j,\tfrac{1}{2}r_j,\tfrac{3}{2}r_j):=\{z\in\mathcal{Y} : \tfrac{1}{2}r_j\leqslant |z-a_j|\leqslant \tfrac{3}{2}r_j\}$ are pairwise disjoint annuli. Moreover, for $z\notin\bigcup_{j=1}^{\infty}\overline{D(a_j,\frac{3}{2}r_j})$ with $\sigma(z)\geqslant \frac{3}{2}$, one has $\sigma(z)=\hat\sigma(z)$.
\end{lem}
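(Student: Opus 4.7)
The plan is to smoothen $\hat\sigma$ by replacing, inside the formula~\eqref{def-exhaustion} defining $\log\hat\sigma$, each occurrence of the kink function $\log^+$ by a smooth function that dominates $\log^+$ and coincides with it outside a prescribed interval around the corner at~$1$. The crucial observation is that the assumption $\mathbb{D}(a_j,2r_j)\cap\mathbb{D}(a_k,2r_k)=\varnothing$ for $j\neq k$ implies that for every $z\in\mathcal{Y}$ at most one of the disks $\overline{\mathbb{D}(a_j,3r_j/2)}$ can contain $z$, so the smoothings attached to distinct punctures never interfere.

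More precisely, I would first construct two smooth profile functions $\chi\colon[0,\infty)\to[0,\infty)$ and $\eta\colon(0,\infty)\to[0,\infty)$ with the following properties: $\chi$ is non-decreasing, equals $0$ on $[0,1/2]$, equals $\log t$ on $[3/2,\infty)$, satisfies $\chi(t)\geqslant\log^+ t$ throughout, and is chosen so that $\chi(t)<\log(3/2)$ for every $t<3/2$; while $\eta$ is non-increasing, equals $-\log s$ on $(0,1/2]$, equals $0$ on $[3/2,\infty)$, and satisfies $\eta(s)\geqslant\log^+(1/s)$ throughout. Given these, define
\[
\log\sigma(z):=\chi(|z|)+\sum_{j\geqslant 1}r_j\,\eta\big(|z-a_j|/r_j\big).
\]
By the disjointness observation above, for every $z\in\mathcal{Y}$ at most one summand in the sum is non-zero, so the sum is locally finite and $\log\sigma$ is smooth on $\mathcal{Y}$ (the only potentially singular point of any individual summand is $a_j\notin\mathcal{Y}$).

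The claimed conclusions then follow routinely. The pointwise inequality $\sigma\geqslant\hat\sigma$ is term-by-term from $\chi\geqslant\log^+$ and $\eta(\cdot)\geqslant\log^+(1/\cdot)$, while $\log\sigma\geqslant 0$ gives $\sigma\geqslant 1$. The support of $\sigma-\hat\sigma$ coincides with that of $\log\sigma-\log\hat\sigma$, which lies inside the announced set $U$ since $\chi-\log^+$ is supported in $[1/2,3/2]$ and $\eta(\cdot)-\log^+(1/\cdot)$ is supported in $[1/2,3/2]$, interpreted in $|z|$ and $|z-a_j|/r_j$ respectively. That $\sigma$ exhausts $\mathcal{Y}$ is inherited from $\hat\sigma$. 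Finally, if $z$ lies outside every $\overline{\mathbb{D}(a_j,3r_j/2)}$, the entire sum in $\log\sigma(z)$ vanishes, leaving $\log\sigma(z)=\chi(|z|)$; together with $\sigma(z)\geqslant 3/2$ and the stipulated property $\chi(t)<\log(3/2)$ for $t<3/2$, this forces $|z|\geqslant 3/2$, where $\chi(|z|)=\log|z|=\log^+|z|$, hence $\sigma(z)=\hat\sigma(z)$.

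The only non-routine point, and where I expect the Appendix to do most of the work, is the one-variable construction of $\chi$ itself: a single smooth function that (i) vanishes identically on $[0,1/2]$, (ii) coincides with $\log t$ on $[3/2,\infty)$, (iii) dominates $\log^+ t$ throughout, and (iv) stays strictly below $\log(3/2)$ on $[0,3/2)$. Because $\log^+$ is not convex, a plain mollification does not automatically give (iii); one must patch $0$ with $\log t$ across $[1/2,3/2]$ through a smooth interpolant whose Taylor jets match at the endpoints and which has been bumped up just enough to sit above $\log^+ t$ on the transition interval. Once this elementary but slightly tedious one-dimensional piece is in place, everything else in the lemma follows immediately.
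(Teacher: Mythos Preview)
Your proposal is correct and follows essentially the same strategy as the paper: replace each occurrence of $\log^+$ in the defining formula for $\log\hat\sigma$ by a smooth dominating profile that agrees with $\log^+$ outside a fixed interval around the kink at~$1$, then verify support, domination, exhaustion, and the ``Moreover'' clause term by term. The paper uses a single profile $H$---constructed explicitly as the primitive $H(r)=4\pi\int_0^r h(s-c)\,\tfrac{ds}{s}$ of a shifted sigmoid---in both slots, setting $\tau=H(|z|)+\sum_j r_j\,H(r_j/|z-a_j|)$, which amounts to your choice $\chi=H$ and $\eta(s)=H(1/s)$; the explicit integration formula is exactly the ``one-variable construction'' you anticipated the Appendix would supply.
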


\begin{figure}[!htbp]
	\begin{center}
		\includegraphics[width=0.8\linewidth]{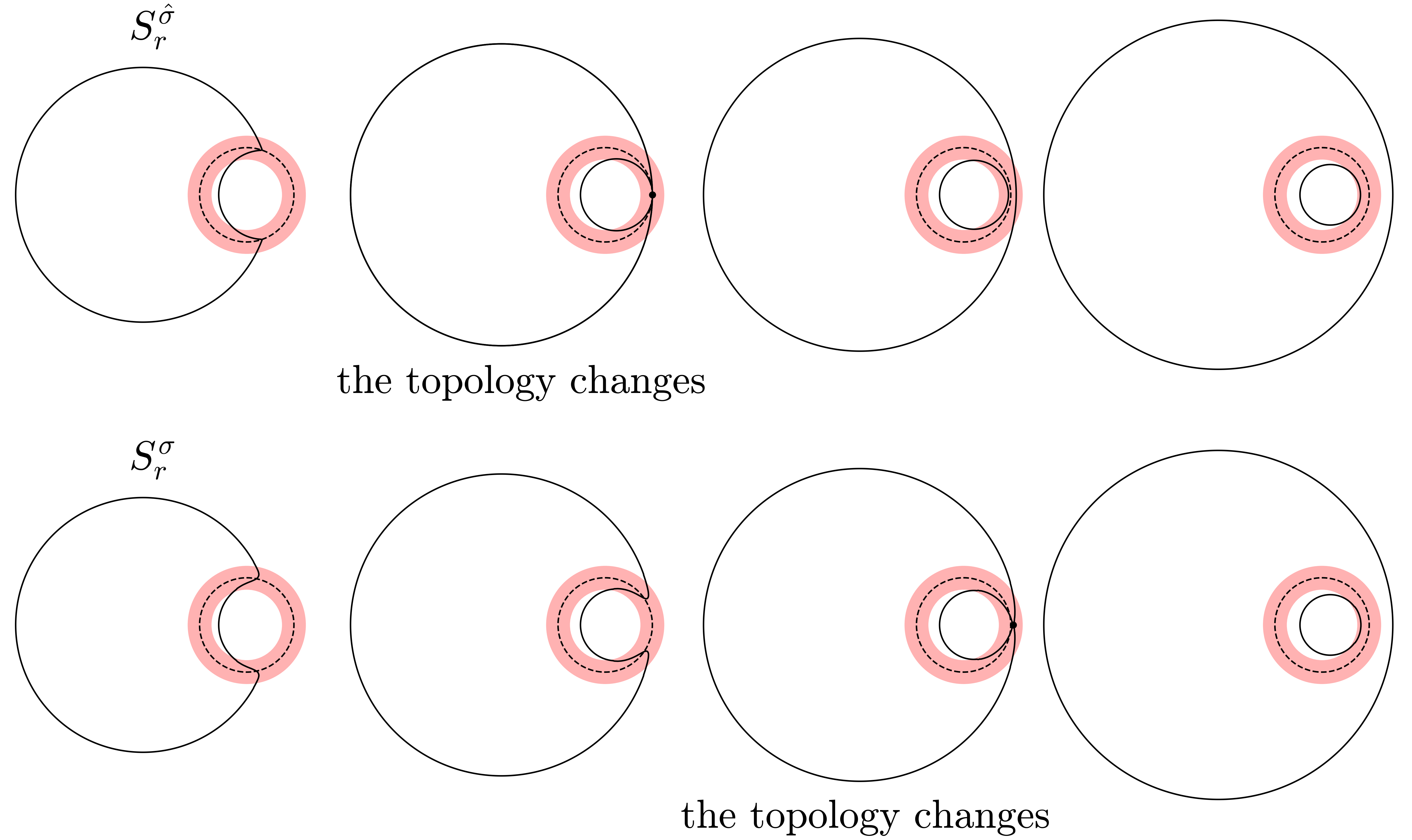} 
		\caption{The curves $S^{\hat{\sigma}}_{r}$ and $S^{\sigma}_r$ as $r$ increases.}
	\end{center}
\end{figure}

Let $B^{\sigma}_r:=\{z\in\mathcal{Y} : \sigma(z)<r\}$ be the $\sigma$-ball of radius r. Then the Lemma above implies
\[
B^\sigma_t\subset \mathbb{D}_t
 \qquad
{\scriptstyle (t\,\geqslant\, 1)}
\]
and
\[
\mathbb{D}_t\backslash\Big(\bigcup_{j=1}^{\infty} \mathbb{D}(a_j,\tfrac{3}{2}r_j)\Big)\subset B^\sigma_t
\qquad
{\scriptstyle (t\,\geqslant\, \tfrac{3}{2})}.
\]

By the argument in~\cite[Proposition~3.3, pp.~32--33]{Sibony-Paun2021}, the weighted Euler characteristic saitsfies
\[
\mathfrak{X}^+_\sigma(r)=\mathfrak{X}_\sigma(r)+O(\log r)=\int_{t=1}^r\#\{j \colon |a_j|<t\}\,\frac{\dif t}{t}+O(\log r).
\]

The following Lemma ensures that the Parabolic Logarithmic Derivative Lemma~\ref{ldl} holds for $\sigma$ (see~\cite[Remark~3.9]{Sibony-Paun2021}).
\begin{lem}\label{lem-smoothing-2}The smooth $2$-form $\dif\dif^c\log\sigma$ defines an order $0$ distribution of finite mass on $\mathcal{Y}$.
\qed
\end{lem}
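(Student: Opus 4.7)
Since $\sigma\geqslant 1$ is smooth on $\mathcal{Y}$, the form $\dif\dif^c\log\sigma$ is a smooth $(1,1)$-form and hence automatically defines a distribution of order~$0$ locally; the only substantive task is to bound its total mass.

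My plan is to localize contributions using Lemma~\ref{lem-smoothing-1}. Outside the disjoint union
\[
U \;=\; A(0,\tfrac12,\tfrac32)\;\cup\;\bigcup_{j\geqslant 1} A(a_j,\tfrac12 r_j,\tfrac32 r_j),
\]
one has $\log\sigma = \hat\tau$, and $\hat\tau$ is harmonic there, since its only non-harmonic loci are the circles $S(0,1)$ and $\{S(a_j,r_j)\}_{j\geqslant 1}$, each lying strictly inside the corresponding component of $U$ by construction. Thus $\dif\dif^c\log\sigma$ vanishes identically on $\mathcal{Y}\setminus U$, and its total mass decomposes as a sum of contributions on the pairwise disjoint annular pieces.

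On a typical annulus $U_j := A(a_j,\tfrac12 r_j,\tfrac32 r_j)$, Stokes' theorem applied to the smooth function $\log\sigma$ yields
\[
\int_{U_j}\dif\dif^c\log\sigma \;=\; \int_{\partial U_j}\dif^c\log\sigma \;=\; \int_{\partial U_j}\dif^c\hat\tau,
\]
the last equality using $\sigma=\hat\sigma$ together with the smoothness of $\hat\tau$ in a neighborhood of $\partial U_j$. Interpreting the right-hand side back through the distributional Poincar\'e--Lelong formula~\eqref{eqn:ddctau} on $U_j\cap\mathcal{Y}$---where the Dirac mass at $a_j$ is excised since $a_j\notin\mathcal{Y}$---I obtain that this equals $\tfrac{r_j}{2}\,\nu(a_j,r_j)\bigl(S(a_j,r_j)\bigr)$, a quantity of order $r_j$. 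The fixed annulus $A(0,\tfrac12,\tfrac32)$ contributes an $O(1)$ term. Summing and invoking $\sum_{j\geqslant 1} r_j<\infty$ produces a finite signed mass bound
\[
\int_{\mathcal{Y}}\dif\dif^c\log\sigma \;\leqslant\; C\Bigl(1+\sum_{j\geqslant 1} r_j\Bigr)\;<\;\infty.
\]

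The main obstacle will be upgrading this signed-mass bound to a total-variation bound, which requires sign control of $\dif\dif^c\log\sigma$ on each annulus. I expect this to fall out of the explicit construction of $\sigma$ in the Appendix: if the smoothing is arranged so that $\log\sigma$ remains (weakly) subharmonic on every component of $U$---for example, as a regularized maximum of $\hat\tau$ with a smooth subharmonic majorant---then $\dif\dif^c\log\sigma\geqslant 0$ as a form, and the total variation coincides with the signed mass just computed. As a fallback one can use a direct pointwise estimate: the smoothing modifies $\hat\tau$ by an amount $\lesssim r_j$ over a radial width of size $\sim r_j$ on $U_j$, forcing $|\dif\dif^c\log\sigma|\lesssim r_j^{-1}$ pointwise on an area $\sim r_j^2$, hence mass $O(r_j)$, again summable. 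Either route yields the asserted finite-mass conclusion.
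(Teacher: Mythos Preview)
Your localization to $U$ and the Stokes computation of the signed mass on each annulus are correct, but as you yourself flag, the argument is not finished: you need either sign control or a genuine pointwise bound, and you establish neither. The paper takes your fallback route and makes it precise. From the explicit formula $\tau=H(|z|)+\sum_j r_j\,H\bigl(r_j/|z-a_j|\bigr)$ in the Appendix, one computes directly in polar coordinates $z=a_j+r\,e^{i\theta}$ that on $A(a_j,\tfrac12 r_j,\tfrac32 r_j)$
\[
\dif\dif^c\tau \;=\; \dif\bigl(-r_j\,h(\tfrac{r_j}{r}-c)\,\dif\theta\bigr)\;=\;\frac{r_j^2}{r^2}\,h'\!\bigl(\tfrac{r_j}{r}-c\bigr)\,\dif r\wedge\dif\theta \;=\; O(1)\,\dif r\wedge\dif\theta
\]
uniformly in $j$ (since $r\asymp r_j$ and $h'$ is bounded); integrating $|\dif r\wedge\dif\theta|$ over that annulus gives $2\pi r_j$, so the total variation is $O\bigl(1+\sum_j r_j\bigr)<\infty$. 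Your scaling heuristic predicts exactly this, but it is not a proof without the formula.

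Your Stokes route would also close, and rather cleanly, once you extract one fact from the construction: the building block $h$ is monotone increasing, so $h'\geqslant 0$. A short radial Laplacian computation then shows that both $H(|z|)$ and each $r_j\,H(r_j/|z-a_j|)$ are subharmonic, hence $\dif\dif^c\log\sigma\geqslant 0$ and total variation equals the signed mass you already bounded by $O(r_j)$ per annulus. The speculation about ``regularized maxima'' is unnecessary; the actual smoothing already has the sign you want. Either way, the missing step in your proposal is to actually read off a property of the explicit $\sigma$ rather than hypothesize one.
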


\section{\bf Proof of the Main Theorem}
\label{Proof of the main theorem}

Let $f\colon\mathbb{C}\rightarrow\mathbb{P}^n(\mathbb{C})$ be a holomorphic curve and let $D=\sum_{i=1}^{n+1}D_i$ be a simple normal crossing divisor on $\mathbb{P}^n(\mathbb{C})$. Let $Q_i$ be the defining homogeneous polynomial of $D_i$ with degree $d_i$. Let $F:\mathbb{P}^n(\mathbb{C})\rightarrow\mathbb{P}^n(\mathbb{C})$ be the endomorphism of degree $d=\lcm(d_1,\dots,d_{n+1})$ defined by
\begin{eqnarray}
\label{endomorphism F definition}
F(z):=\big[Q^{m_1}_1(z)\,\colon\,\dots\,\colon\,Q^{m_{n+1}}_{n+1}(z)\big],
\end{eqnarray}
where $m_i=\tfrac{d}{d_i}$ for $1\leqslant i\leqslant n+1$.
By construction, $F$ maps  $\mathbb{P}^n(\mathbb{C})\setminus D$ to $(\mathbb{C}^*)^n$. The critical points of $F$ consists of hypersurfaces $D_i$ (if $m_i\geqslant 2$) and a hypersurface $\mathcal{V}$ of degree $\sum_{i=1}^{n+1}d_i-(n+1)>0$ defined by 
\[
M(z):=\det
\dfrac{\partial (Q_1, \dots, Q_{n+1})}{\partial (z_0, \dots, z_{n})}=0.
\]

\begin{lem}\label{lem:transversal}
The hypersurface $\mathcal V$ is in general position with $\{D_i\}_{i=1}^{n+1}$, if and only if the hypersurfaces $\{D_i\}_{i=1}^{n+1}$ intersect transversally.
\end{lem}
\proof The {\em if} part was proven in~\cite[Section~5]{Guo-Sun-Wang-2022}. For the {\em only if} part, without loss of generality we may assume that there exists some $p\in D_1\cap\dots\cap D_n$ such that
\begin{itemize}
\item either $p$ is a non-smooth point in some $D_k$, $1\leqslant k\leqslant n$, i.e.
\[
\left(\frac{\partial Q_k}{\partial z_0}(p),\dots,\frac{\partial Q_k}{\partial z_n}(p)\right)=(0,\dots,0);
\]
\item or the normal vectors $\{(\frac{\partial Q_k}{\partial z_0}(p),\dots,\frac{\partial Q_k}{\partial z_n}(p))\}_{1\leqslant k\leqslant n}$ of the tangent spaces are linearly dependent.
\end{itemize}
In both cases,
\[
\rank\left(\frac{\partial Q_i}{\partial z_j}(p)\right)_{1\leqslant i\leqslant n,0\leqslant j\leqslant n}<n,
\]
i.e. each $n$-minor of the $n\times (n+1)$ matrix has determinant $0$.

The point $p$ has a homogeneous representation $[p_0:\dots:p_n]$. There is some $s\in\{0,\dots,n\}$ such that $p_s\neq 0$. Following Guo-Sun-Wang's argument~\cite{Guo-Sun-Wang-2022}, by using the Euler formula
\[
\sum\limits_{j=1}^n\frac{\partial Q_i}{\partial z_j}z_j=d_i\cdot Q_i,
\]
we have
\[
p_s\,M(p)=
\det
\begin{pmatrix}
\frac{\partial Q_1}{\partial z_0}(p) & \dots & d_1\cdot Q_1(p) & \dots & \frac{\partial Q_1}{\partial z_n}(p)\\
\vdots & \ddots & \vdots & \ddots & \vdots\\
\frac{\partial Q_{n+1}}{\partial z_0}(p) & \dots & d_{n+1}\cdot Q_{n+1}(p) & \dots & \frac{\partial Q_{n+1}}{\partial z_n}(p)\\
\end{pmatrix}.
\]
Noting that $Q_1(p)=\dots=Q_n(p)=0$ since $p\in D_1\cap\dots\cap D_n$. Hence
\[
p_s\,M(p)=(-1)^{n+s}\,d_{n+1}\cdot Q_{n+1}(p)\,\det\left(\frac{\partial Q_i}{\partial z_j}(p)\right)_{1\leqslant i\leqslant n,0\leqslant j\leqslant n, j\neq s}=0.
\]
Thus $M(p)=0$, i.e. $\mathcal{V}$ and $\{D_i\}_{i=1}^n$ intersect at $p$. We conclude that they are not in general position.\qed

\begin{figure}[!htbp]
	\begin{center}
		\includegraphics[width=0.4\linewidth]{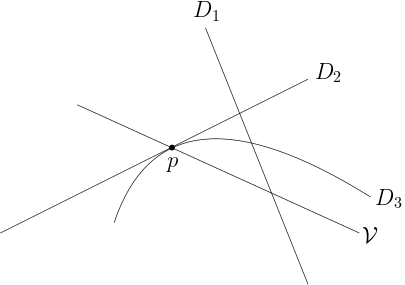} 
		\caption{When $D_2$ and $D_3$ intersect non-transversally, $\mathcal{V}$ and $D_2$, $D_3$ are not in general position}
	\end{center}
\end{figure}

Set $g:=F\circ f$. The image $F(\mathcal{V})$ is an algebraic variety, denoted by $\mathcal{W}$. 
It is clear that
\[
T_g(r)=O\big(T_f(r)\big).
\]

As an illustrated example, one first looks at the case of projective plane and $D$ is the union of three conics $D_1, D_2, D_3$ in $\mathbb{P}^2(\mathbb{C})$ in general position. Then for {\rm every} $z \in f^{-1}(\mathcal{V})$ one has
	\[
	\ord_zg^*\mathcal{W}\geqslant \ord_zf^*\mathcal{V}+1.
	\]
	\begin{figure}[!htbp]
		\begin{center}
			\scalebox{0.8}{\input{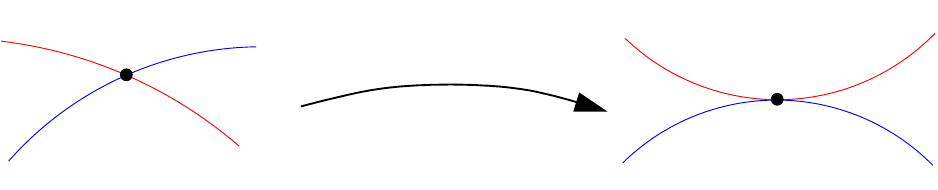_t}} 
		\end{center}
	\end{figure}

Indeed, since $g = F \circ f$, one always has $\ord_zg^*\mathcal{W}\geqslant \ord_zf^*\mathcal{V}$. Thus one only needs to exclude the possibility that $\ord_zg^*\mathcal{W}= \ord_zf^*\mathcal{V}$. In the simple case when $\ord_zg^*\mathcal{W}= \ord_zf^*\mathcal{V}=1$, this means exactly that $F$ has maximal rank at the point $p= f(z)$, which contradicts the definition of $\mathcal{V}$ as the critical set of $F$.


\begin{pro}
	\label{compare f and g counting}
 There exists a proper subvariety $\mathcal{Z}$ of $\mathcal{V}$ such that for every $z \in f^{-1}(\mathcal{V} \setminus \mathcal{Z})$, one has
\[
\ord_zf^*\mathcal{V}
\leqslant
\ord_zg^*\mathcal{W}-1.
	\]
\end{pro}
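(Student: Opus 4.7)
The plan is to construct an exceptional locus $\mathcal{Z}\subset\mathcal{V}$ explicitly, and then establish the inequality through a short local computation showing that $F$ is ramified to order at least $2$ along $\mathcal{V}$ at its generic points. I would set
\[
\mathcal{Z}:=\Sing(\mathcal{V})\,\cup\,\big(\mathcal{V}\cap F^{-1}(\Sing(\mathcal{W}))\big)\,\cup\,\Sing\big(F|_{\mathcal{V}}\colon\mathcal{V}\to\mathcal{W}\big),
\]
where the last term denotes the critical locus of the restricted map. Since $F$ is a finite endomorphism of $\mathbb{P}^n(\mathbb{C})$, the image $\mathcal{W}=F(\mathcal{V})$ is a hypersurface and $F|_{\mathcal{V}}$ is a finite dominant morphism between hypersurfaces; each of the three pieces above is therefore a proper algebraic subvariety of $\mathcal{V}$.

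Now fix $z\in f^{-1}(\mathcal{V}\setminus\mathcal{Z})$ and set $p:=f(z)$. The idea is to work in local holomorphic coordinates $(w_1,\ldots,w_n)$ centered at $p$ with $\mathcal{V}=\{w_1=0\}$, together with local coordinates $(u_1,\ldots,u_n)$ centered at $F(p)$ with $\mathcal{W}=\{u_1=0\}$. Since $F(\mathcal{V})\subset\mathcal{W}$, the germ $F_1:=u_1\circ F$ vanishes on $\{w_1=0\}$, so one can factor $F_1=w_1\cdot G_1$ for some holomorphic germ $G_1$ near $p$.

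The crucial step will be to upgrade this factorization to $F_1=w_1^{2}\cdot\tilde{G}_1$. For any point $q\in\mathcal{V}$ close to $p$, the Jacobian matrix has block form
\[
DF(q)=\begin{pmatrix}G_1(q)&\mathbf{0}\\ \ast& J_{F|_{\mathcal{V}}}(q)\end{pmatrix},
\]
so that $\det DF(q)=G_1(q)\cdot\det J_{F|_{\mathcal{V}}}(q)$. By the very definition of $\mathcal{V}$ as the critical set of $F$, the left-hand side vanishes identically for $q\in\mathcal{V}$; on the other hand, since $p\notin\Sing(F|_{\mathcal{V}})$, the factor $\det J_{F|_{\mathcal{V}}}(q)$ is nonzero for $q\in\mathcal{V}$ in a neighborhood of $p$. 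Hence $G_1$ vanishes on $\mathcal{V}$ near $p$, which gives $G_1=w_1\cdot\tilde{G}_1$ and consequently $F_1=w_1^{2}\cdot\tilde{G}_1$.

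Writing $m:=\ord_z f^*\mathcal{V}=\ord_z(w_1\circ f)\geqslant 1$, this immediately yields
\[
\ord_z g^*\mathcal{W}=\ord_z(F_1\circ f)=2\,\ord_z(w_1\circ f)+\ord_z(\tilde{G}_1\circ f)\geqslant 2m\geqslant m+1,
\]
which is the required inequality. The only delicate part of the argument will be verifying that each component of $\mathcal{Z}$ is a \emph{proper} subvariety of $\mathcal{V}$; this uses that $F$ is finite and that $\mathcal{V}$, $\mathcal{W}$ have their expected codimension, which are consequences of the generic condition~\eqref{the geometric condition}. Once these structural facts are in place, the identity $F_1=w_1^{2}\cdot\tilde{G}_1$, quantifying the ramification of $F$ along its own critical locus, is the geometric heart of the proof.
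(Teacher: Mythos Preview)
Your proof is correct and follows the same geometric idea as the paper's: choose local coordinates flattening $\mathcal{V}$ and $\mathcal{W}$, and show that the local equation of $\mathcal{W}$ pulls back to something divisible by $w_1^2$. The paper packages this by asserting the local normal form $F(x_1,\dots,x_n)=(x_1^m,x_2,\dots,x_n)$ with $m\geqslant 2$, after excluding $\mathcal{Z}=(\mathcal{V}\cap D)\cup\Sing(\mathcal{V})\cup F^{-1}(\Sing(\mathcal{W}))$; from this the inequality is immediate. Your route replaces the normal-form citation by the explicit block computation $\det DF|_{\mathcal{V}}=G_1\cdot\det J_{F|_{\mathcal{V}}}$, and correspondingly you add $\Sing(F|_{\mathcal{V}})$ to $\mathcal{Z}$ (and drop $\mathcal{V}\cap D$). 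This is a genuine, if small, difference: your argument is more self-contained and in fact supplies precisely the missing hypothesis---that $F|_{\mathcal{V}}$ be unramified at $p$---needed to justify the paper's normal form, while the paper's formulation gives the slightly stronger conclusion $\ord_z g^*\mathcal{W}=m\cdot\ord_z f^*\mathcal{V}$ rather than $\geqslant 2\cdot\ord_z f^*\mathcal{V}$. One minor remark: the properness of each piece of your $\mathcal{Z}$ follows already from $F$ being a finite endomorphism (hence $F|_{\mathcal{V}}$ finite and generically \'etale in characteristic zero), and does not really require the generic condition~\eqref{the geometric condition}.
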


\begin{proof}
By our construction, the hypersurface $\mathcal{V}$ is contained in the support of the ramification divisor of the endomorphism $F:\, \mathbb{P}^n(\mathbb{C}) \to \mathbb{P}^n(\mathbb{C})$. Putting
\begin{align*}
  \mathcal{Z}_1&= \mathcal{V} \cap \mathrm{Supp}\,D,\\
  \mathcal{Z}_2&=\Sing(\mathcal{V})\,\cup\,F^{-1}(\Sing(\mathcal{W})).
\end{align*}
Let $\mathcal{Z}=\mathcal{Z}_1\cup\mathcal{Z}_2$. Then for any point $p \in \mathcal{V} \setminus \mathcal{Z}$, there exist local coordinate systems $(x_1,\dots, x_n)$ about $p$ and $(y_1,\dots, y_n)$ about $q=F(p)$ such that locally one has
\[
  \mathcal{V} = \{x_1=0\},
   \qquad
 \mathcal{W} = \{y_1=0\},
\]
\[
F(x_1,\dots, x_n) =: (y_1,\dots, y_n)=(x_1^m,x_2,\dots,x_n).
\]
Here, by our construction of $\mathcal{V}$, at point $p$ the associated $m\geqslant 2$. Thus locally we have  $F^*\mathcal{W} = m\,\mathcal{V}$. Consequently,
\[
\mathrm{ord}_z g^*\mathcal{W} = \mathrm{ord}_z f^*(F^*\mathcal{W}) = m\, \mathrm{ord}_z f^*\mathcal{V} = \mathrm{ord}_z f^*\mathcal{V} + (m-1)\, \mathrm{ord}_z f^*\mathcal{V} \geqslant \mathrm{ord}_z f^*\mathcal{V} +1
\]
for every $z \in f^{-1}(\mathcal{V} \setminus \mathcal{Z})$.
\end{proof}

Now put $\mathcal{E}:=f^{-1}(D)$, which is a discrete countable set of points in $\mathbb{C}$. We arrange $\mathcal{E}=\{a_j\}_{j=1}^{\infty}$ so that $|a_1|\leqslant |a_2|\leqslant\dots$. Note that $\#\{\mathbb{D}_t\cap\mathcal{E}\}$ is exactly $n_f^{[1]}(t,D)$, which is finite. Denote by $\widetilde{f}$, $\widetilde{g}$ the restrictions of $f$, $g$ on $\mathcal{Y}:=\mathbb{C}\setminus\mathcal{E}$ respectively. Consider the exhaustion function $\hat{\sigma}$ defined in the previous section. Take a smoothing $\sigma$ of $\hat{\sigma}$ as in the Appendix. Denote by $B^\sigma_r$ and $S^\sigma_r$ the $\sigma$-ball of radius $r$ and its boundary. The construction in the appendix ensures $\sigma(z)\geqslant\hat{\sigma}(z)\geqslant |z|$ on $\mathcal{Y}$, hence $B^\sigma_r\subset\mathbb{D}_r$ and
\begin{equation}
\label{mean euler characterisitic vs counting}
\mathfrak{X}_{\sigma}(r)
\leqslant N^{[1]}_f(r,D)+O(\log r)
\qquad
{\scriptstyle (r\,>\,1)}.
\end{equation}

Suppose on the contrary that \eqref{defect relation} does not hold. Then the weighed Euler characteristic $\mathfrak{X}_{\sigma}(r)$ satisfies
\begin{equation}
\label{euler characteristic and order of f}
\limsup_{r\rightarrow \infty}\dfrac{\mathfrak{X}_{\sigma}(r)}{T_f(r)}=0.
\end{equation}

Since
\[
B^\sigma_t\subset \mathbb{D}_t
 \qquad
{\scriptstyle (t\,\geqslant\, 1)}
\]
and
\[
\mathbb{D}_t\backslash\Big(\bigcup_{j=1}^{\infty} \mathbb{D}(a_j,\tfrac{3}{2}r_j)\Big)\subset B^\sigma_t
\qquad
{\scriptstyle (t\,\geqslant\, \tfrac{3}{2})},
\]
one has
\[
\aligned
0\leqslant T_f(r)-T_{\widetilde{f},\sigma}(r)
&\leqslant\int_{\tfrac{3}{2}}^r\frac{\dif t}{t}\int_{\Big(\bigcup_{j=1}^{\infty} \mathbb{D}(a_j,\tfrac{3}{2}r_j)\Big)\cap \mathbb{D}_t}f^*\omega+O(1)\\
&\leqslant\int_1^r\frac{\dif t}{t}\int_{\bigcup_{j=1}^{\infty} \mathbb{D}(a_j,\tfrac{3}{2}r_j)}f^*\omega+O(1)\\
&=\int_1^r\frac{\dif t}{t}\Big(\sum\limits_{j=1}^{\infty}\int_{\mathbb{D}(a_j,\tfrac{3}{2}r_j)}f^*\omega\Big)+O(1),
  \qquad
{\scriptstyle (r\,\geqslant\, 1)}.
\endaligned
\]
Recall (\ref{def-exhaustion}) that the radius $r_j>0$ can be chosen arbitrarily small. For our purpose, for each $j\geqslant 1$, we choose $r_j>0$ sufficiently small so that $\int_{\mathbb{D}(a_j,\tfrac{3}{2}r_j)}f^*\omega<2^{-j}$. Hence the above estimate yields
\[
0\leqslant T_f(r)-T_{\widetilde{f},\sigma}(r)\leqslant \log r+O(1),
  \qquad
{\scriptstyle (r\,\geqslant\, 1)}.
\]
This together with \eqref{euler characteristic and order of f} implies
\[
\limsup_{r\rightarrow \infty}\dfrac{\mathfrak{X}_\sigma(r)}{T_{\widetilde{f},\sigma}(r)}=0.
\]
Hence the technical assumption \eqref{assumption of weight euler charactersistic} is satisfied, which allows us to use all of the obtained results in the parabolic setting. First, applying Theorem~\ref{Min ru smt-non linear targets in parabolic setting}, we receive
\begin{equation}
\label{min ru smt application}
T_{\widetilde{f},\sigma}(r)
\leqslant
\dfrac{N_{\widetilde{f},\sigma}(r,\mathcal{V})}{\deg\mathcal{V}}
+o\big(T_{\widetilde{f},\sigma}(r)\big)
\,\,\parallel.
\end{equation}

Next, using Corollary~\ref{tangency not often} for  $\widetilde{g}$, we get
\begin{equation}
\label{counting tangent point for g}
N_{\widetilde{g},\sigma}(r,\mathcal{W})
-
N^{[1]}_{\widetilde{g},\sigma}(r,\mathcal{W})
=
o\big(T_{\widetilde{g},\sigma}(r)\big)
\,\,\parallel.
\end{equation}

On the other hand, it follows from Theorem~\ref{smt-semiabelian, truncation level1} and Proposition~\ref{compare f and g counting} that
\begin{equation}
\label{counting function tilde f and tilde g}
N_{\widetilde{f},\sigma}(r,\mathcal{V})
\leqslant
N_{\widetilde{g},\sigma}(r,\mathcal{W})
-
N^{[1]}_{\widetilde{g},\sigma}(r,\mathcal{W})
+o\big(T_{\widetilde{f},\sigma}(r)\big)
\,\,\parallel.
\end{equation}

Combining \eqref{min ru smt application}, \eqref{counting tangent point for g}, \eqref{counting function tilde f and tilde g}, one has

\begin{align*}
T_{\widetilde{f},\sigma}(r)
&\leqslant
\dfrac{N_{\widetilde{f},\sigma}(r,\mathcal{V})}{\deg\mathcal{V}}
+o\big(T_{\widetilde{f},\sigma}(r)\big)
\,\,\parallel\\
&\leqslant
\dfrac{N_{\widetilde{g},\sigma}(r,\mathcal{W})
	-
	N^{[1]}_{\widetilde{g},\sigma}(r,\mathcal{W})}{\deg\mathcal{V}}
+o\big(T_{\widetilde{f},\sigma}(r)\big)
\,\,\parallel\\
&=
o\big(T_{\widetilde{g},\sigma}(r)\big)
+o\big(T_{\widetilde{f},\sigma}(r)\big)
\,\,\parallel,
\end{align*}
which is a contradiction. This finishes the proof of the Main Theorem.

\begin{rmk}
	In the case where $f\colon\mathbb{C}\rightarrow\mathbb{P}^2(\mathbb{C})$ is an algebraically nondegenerate holomorphic curve and where $\mathcal{C}$ is the collection of two lines and one conic in $\mathbb{P}^2(\mathbb{C})$,
	in a private note, Noguchi obtained a Second Main Theorem of the form
\[
T_f(r)
\leqslant C\, N_f(r,\mathcal{C})
+
\big[
N_f^{[2]}(r,\mathcal{V})-N_f^{[1]}(r,\mathcal{V})
\big]
+o\big(T_f(r)\big)
\,\,\parallel,
\]
where $\mathcal{V}$ is the critical curve of the endomorphism defined as above, and $C>0$ is some constant. Although the right hand side of the above inequality involves a quantity depending on $\mathcal{V}$ (which actually counts the number of tangent points of $f$ and $\mathcal{V}$), this term is negligible when $f$ omits $\mathcal{C}$.
\end{rmk}

\begin{rmk}
	Our result can be extended to the case of entire holomorphic curves into algebraic varieties of log-general type $X$ with $\overline{q}(X)=\dim X$ by similar argument.
\end{rmk}

\appendix
\setcounter{section}{-1}
\section{\bf A detailed construction of one smooth exhaustion function}\label{append}

We provide an explicit construction of  \cite[pp.~32--33,  Example~(2)]{Sibony-Paun2021}, precisely, a smooth exhaustion function $\sigma$ on the parabolic surface $\mathcal{Y}:=\mathbb{C}\backslash\{a_j\}_{j=1}^{\infty}$ which  satisfies Lemma~\ref{lem-smoothing-1} and Lemma~\ref{lem-smoothing-2}.

\begin{proof}[Proof of Lemma~\ref{lem-smoothing-1}] Define
\[
h(r):=\left\{
\aligned
&0 &
  \qquad
{\scriptstyle (r\,\leqslant\, \tfrac{3}{4})},
\\
&\frac{1}{4\pi}\frac{1}{1+e^{\frac{r-1}{(r-1)^2-1/16}}}  &
  \qquad
{\scriptstyle (\tfrac{3}{4}\,<\,r\,<\,\tfrac{5}{4})},
\\
&\frac{1}{4\pi} &
  \qquad
{\scriptstyle (r\,\geqslant\, \tfrac{5}{4})}.
\endaligned\right.
\]
The function $h(r)$ is bounded and agrees with $\frac{1}{4\pi}\mathds{1}_{r>1}$ outside $[\tfrac{3}{4},\tfrac{5}{4}]$.
\begin{figure}[!htbp]
	\begin{center}
		\includegraphics[width=0.4\linewidth]{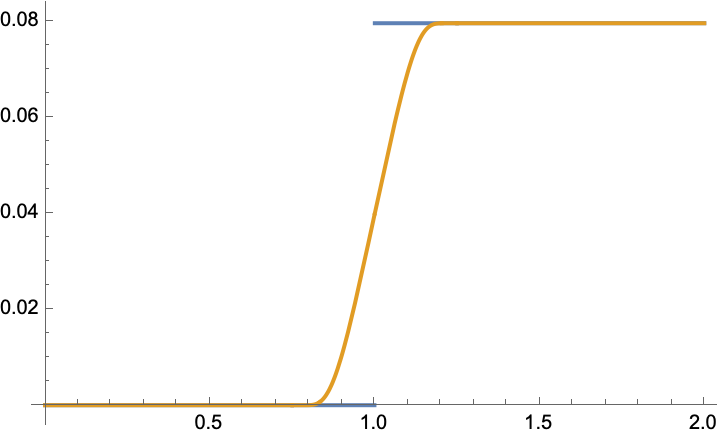} 
		\caption{Graph of $h(r)$ and $\frac{1}{4\pi}\mathds{1}_{r>1}$.}
	\end{center}
\end{figure}

By symmetry, for $r\geqslant \tfrac{5}{4}$, the integration
\[
\int_{0}^r 4\pi\,h(s)\dif s=\int_{0}^r \mathds{1}_{s>1}\dif s=r-1.
\]
However, the integration
\[
\int_{0}^r 4\pi\,h(s)\frac{\dif s}{s}<\int_{0}^r \mathds{1}_{s>1}\frac{\dif s}{s}=\log r,
\]
since $1/s$ is strictly decreasing on $(0,+\infty)$. We need a small translation $c\in(0,\tfrac{1}{2})$ to ensure that the primitive
\[
H(r):=4\pi\int_{0}^r h\big(s-c)\big)\frac{\dif s}{s}
\]
agrees with $\log^+ r=\int_{0}^r \mathds{1}_{r>1}\frac{\dif s}{s}$ outside $[\tfrac{1}{2},\tfrac{3}{2}]$.
\begin{figure}[!htbp]
	\begin{center}
		\includegraphics[width=0.4\linewidth]{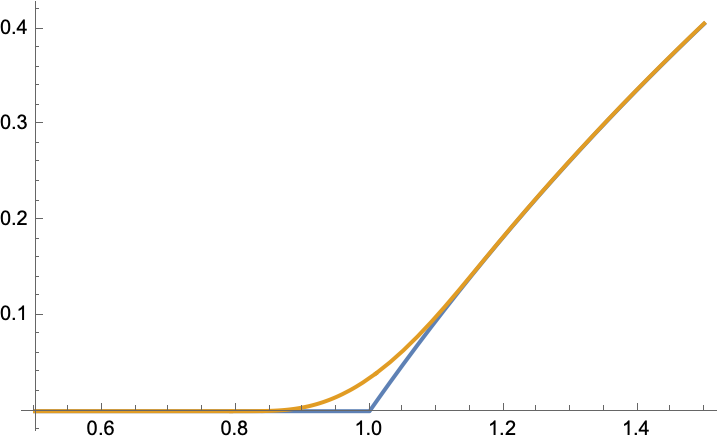} 
		\caption{Graph of $H(r)$ and $\log^+r$.}
	\end{center}
\end{figure}

Thus we get a smoothing $H(r)$ of $\log^+r$ with
\begin{align}
\label{approx-log-by-H}
0\leqslant H(r)- \log^+ r\leqslant \log^+\tfrac{3}{2}<\tfrac{1}{2}.
\end{align}
Together with the monotonicity of $H$, one has $H(r)=\log^+r$ when $H(r)\geqslant \log\frac{3}{2}$.

Define
\[
\tau:=H(|z|)+\sum\limits_{j=1}^\infty r_j\,H\big(|\tfrac{r_j}{z-a_j}|\big),
\qquad
\sigma:=\exp(\tau).
\]
Then $\sigma\geqslant\hat{\sigma}$ and the difference $\sigma-\hat{\sigma}$ is supported on
\[
\mathrm{Supp}(\sigma-\hat{\sigma})
\subset
U:=\big(A(0,\tfrac{1}{2},\tfrac{3}{2})\backslash\mathcal E\big)\cup
\bigcup_{j=1}^{\infty}A(a_j,\tfrac{1}{2}r_j,\tfrac{3}{2}r_j).
\]
where $A(a_j,\tfrac{1}{2}r_j,\tfrac{3}{2}r_j)$ are pairwise disjoint.

For $z\notin\bigcup_{j=1}^{\infty}\overline{D(a_j,\frac{3}{2}r_j})$, $\log^+|\frac{r_j}{z-a_j}|=H(|\frac{r_j}{z-a_j}|)=0$ for each $j$. Thus $\tau(z)=H(|z|)$ and $\hat{\tau}(z)=\log^+|z|$, which are equal when $\tau(z)\geqslant \log\frac{3}{2}$. Taking exponential, $\sigma=\hat{\sigma}$ when $\sigma\geqslant \frac{3}{2}$.

\end{proof}

\begin{proof}[Proof of Lemma~\ref{lem-smoothing-2}] In polar coordinates $z=a+r\,e^{i\theta}$ for some $a\in\mathbb{C}$, for a smooth function $\phi$ one has~\cite[pp.~2]{Noguchi-Winkelmann2014}
\[
\dif^c\phi=\frac{1}{4\pi}\left(r\frac{\partial\phi}{\partial r}\dif\theta-\frac{1}{r}\frac{\partial\phi}{\partial \theta}\dif r\right).
\]
The smooth $2$-form $\dif\dif^c\log\sigma$ is supported on $U$ since $\log\sigma$ is harmonic elsewhere. On the annulus $A(0,\tfrac{1}{2},\tfrac{3}{2})$, in polar coordinates $z=re^{i\theta}$ one has
\[
\dif\dif^c H(|z|)=\dif\left(\frac{r}{4\pi}\frac{\partial H(r)}{\partial r}\dif\theta\right)=\dif\left(h(r-c)\dif\theta\right)=O(1)\dif r\wedge\dif\theta=O(1)\,r\dif r\wedge\dif\theta
\]
since $r\in[\tfrac{1}{2},\tfrac{3}{2}]$ is bounded. Thus $\dif\dif^c H(|z|)$ is of finite mass on $A(0,\tfrac{1}{2},\tfrac{3}{2})\backslash\mathcal{E}$.

On the annulus $A(a_j,\tfrac{1}{2}r_j,\tfrac{3}{2}r_j)$, in polar coordinates $z=a_j+re^{i\theta}$ one has
\[
\aligned
\dif\dif^c
\sum\limits_{j=1}^\infty r_j\,H\big(|\tfrac{r_j}{z-a_j}|\big)
&=\dif\left(\frac{r}{4\pi}\frac{\partial r_j\,H(\tfrac{r_j}{r})}{\partial r}\dif\theta\right)\\
&=\dif\left(-r_j\,h(\tfrac{r_j}{r}-c)\dif\theta\right)=O(r_j)\dif r\wedge\dif\theta=O(1)\,r\dif r\wedge\dif\theta.
\endaligned
\]
Since $\sum\limits_{j=1}^{\infty}r_j^2\leqslant\sum\limits_{j=1}^{\infty} r_j<+\infty$, the support $U$ is of finite Lebesgue measure. We conclude that $\dif\dif^c\log\sigma$ is of finite mass.
\end{proof}

\begin{center}
	\bibliographystyle{alpha}
	\thispagestyle{empty}
	\bibliography{references}
\end{center}

\end{document}